\newtheorem{theorem}{Theorem}[section]
\newtheorem{lemma}[theorem]{Lemma}
\newtheorem{prop}[theorem]{Proposition}
\newtheorem{assumption}[theorem]{Assumption}
\newtheorem{corro}[theorem]{Corollary}
\theoremstyle{definition}
\newtheorem{definition}[theorem]{Definition}
\theoremstyle{remark}
\newtheorem{remark}[theorem]{Remark}
\numberwithin{equation}{section}
\DeclareMathAlphabet{\mathsl}{OT1}{cmss}{m}{sl}
\SetMathAlphabet{\mathsl}{bold}{OT1}{cmss}{bx}{sl}
\newcommand{\al}{\ensuremath{\alpha}}
\newcommand{\ga}{\ensuremath{\gamma}}
\newcommand{\ka}{\ensuremath{\kappa}}
\newcommand{\si}{\ensuremath{\sigma}}
\newcommand{\om}{\ensuremath{\omega}}
\newcommand{\ve}{\ensuremath{\varepsilon}}
\newcommand{\vp}{\ensuremath{\varphi}}
\newcommand{\Si}{\ensuremath{\Sigma}}
\newcommand{\Om}{\ensuremath{\Omega}}
\newcommand{\cD}{\ensuremath{\mathcal D}}
\newcommand{\cE}{\ensuremath{\mathcal E}}
\newcommand{\cF}{\ensuremath{\mathcal F}}
\newcommand{\cL}{\ensuremath{\mathcal L}}
\newcommand{\bbR}{\ensuremath{\mathbb R}}
\newcommand{\bbZ}{\ensuremath{\mathbb Z}} 
\newcommand{\md}{\ensuremath{\mathrm{d}}}
\newcommand{\mD}{\ensuremath{\mathrm{D}}}
\newcommand{\norm}[3]{%
   \ensuremath{%
     \mathchoice{\big\lVert #1 \big\rVert}
     {\lVert #1 \rVert}
     {\lVert #1 \rVert}
     {\lVert #1 \rVert}_{\raisebox{-.0ex}{$\scriptstyle \ell^{\raisebox{.2ex}{$\scriptscriptstyle #2$}} (#3)$}}
   }
}
\newcommand{\Norm}[2]{%
  \ensuremath{%
    \mathchoice{\big\lVert #1 \big\rVert}
     {\lVert #1 \rVert}
     {\lVert #1 \rVert}
     {\lVert #1 \rVert}_{\raisebox{-.0ex}{$\scriptstyle #2$}}
  }
}
\DeclareMathOperator{\mean}{\mathbb{E}}
\DeclareMathOperator{\Mean}{\mathrm{E}}
\DeclareMathOperator{\prob}{\mathbb{P}}
\DeclareMathOperator{\Prob}{\mathrm{P}}
\DeclareMathOperator{\supp}{\mathrm{supp}}
\DeclareMathOperator{\sign}{\mathrm{sign}}
\newcommand{\ldef}{\ensuremath{\mathrel{\mathop:}=}}
\begin{document}

\title[QIP for the one-dimensional dynamic RCM]{Invariance Principle for the one-dimensional dynamic Random Conductance Model under Moment Conditions}


\author{Jean-Dominique Deuschel}
\address{Technische Universit\"at Berlin}
\curraddr{Strasse des 17. Juni 136, 10623 Berlin}
\email{deuschel@math.tu-berlin.de}
\thanks{}

\author{Martin Slowik}
\address{Technische Universit\"at Berlin}
\curraddr{Strasse des 17. Juni 136, 10623 Berlin}
\email{slowik@math.tu-berlin.de}
\thanks{}

\subjclass[2000]{ 60K37, 60F17, 82C41}

\keywords{Random conductance model,  time-dependent random environment, invariance principle, corrector, Moser iteration}

\date{\today}

\dedicatory{}

\begin{abstract}
  Recent progress in the understanding of quenched invariance principles (QIP) for a continuous-time random walk on $\bbZ^d$ in an environment of dynamical random conductances is reviewed and extended to the $1$-dimensional case.  The law of the conductances is assumed to be ergodic with respect to time-space shifts and satisfies certain integrability conditions.
\end{abstract}

\maketitle

\tableofcontents

\section{Introduction}

\subsection{The model}
Consider the $d$-dimensional Euclidean lattice, $(\bbZ_d, E_d)$, for $d \geq 1$.  The vertex set, $V_d$, of this graph equals $\bbZ^d$ and the edge set, $E_d$, is given by the set of all non-oriented nearest neighbour bonds, i.e. $E_d \ldef \{ \{x,y\}: x,y \in \bbZ^d, |x-y|=1\}$.  We also write $x \sim y$ if $\{x,y\} \in E_d$.  The graph $(\bbZ^d, E_d)$ is endowed with a family $\om = \{\om_t(e) : e \in E_d, \, t\in \bbR \} \in \Om \ldef (0, \infty)^{\bbR \times E_d}$ of time-dependent, positive weights.  To simplify notation,  for $x, y \in \bbZ^d$ and $t\in \bbR$ we set $\om_t(x,y) = \om_t(y,x) = \om_t(\{x,y\})$ if $\{x,y\} \in E_d$ and $\om_t(x,y) = 0$ otherwise.  We introduce a \emph{time-space} shift $\tau_{s,z}$ by $(s, z) \in \bbR \times \bbZ^d$ through
\begin{align*}
  (\tau_{s, z\,} \om)_t(x,y) \;\ldef\; \om_{t+s}(x+z,y+z),
  \qquad \forall\, t \in \bbR, \{x, y\} \in E_d.
\end{align*}
Further, consider a probability measure, $\prob$, on the measurable space $(\Om, \cF)$ where $\cF$ denotes the Borel-$\si$-algebra on $\Om$, and we write $\mean$ to denote the corresponding expectation with respect to $\prob$.

We impose the following conditions on the probability measure $\prob$.
\begin{assumption}\label{ass:P}
  Assume that $\prob$ satisfies the following conditions:
  \begin{enumerate}[(i)]
  \item $\prob$ is ergodic and stationary with respect to time-space shifts, that is $\prob \circ \tau_{t,x}^{-1} = \prob$ for all $t \in \bbR$, $x \in \bbZ^d$ and $\prob[A] \in \{0,1\}$ for any $A \in \cF$ such that $\tau_{t,x}(A) = A$ for all $t \in \bbR$, $x \in \bbZ^d$.
  \item $\mean\big[\om_t(e)\big]< \infty$ and $\mean\big[\om_t(e)^{-1}\big] < \infty$ for all $e \in E_d$ and $t \in \bbR$.
  \end{enumerate}
\end{assumption}
\begin{remark}
  Note that time-space ergodicity assumption is quite general.  In particular, it includes as a special case the static situation, that is the conductances $\om$ are independent of time and $\prob$ is ergodic with respect to space shifts.
\end{remark}
For any fixed $\om \in \Om$, we introduce the following (time-dependent) measures $\mu_t^{\om}$ and $\nu_t^{\om}$ on $\bbZ^d$ that are defined by
\begin{align}
  \mu_t^{\om}(x)
  \;\ldef\;
  \sum_{y \sim x} \om_t(x,y)
  \qquad \text{and} \qquad
  \nu_t^{\om}(x)
  \;\ldef\;
  \sum_{y \sim x} \frac{1}{\om_t(x,y)},
  \qquad \forall\, t \in \bbR.
\end{align}
In addition, for any compact interval $I \subset \bbR$ and any finite $B \subset \bbZ^d$ let us define a locally time-space averaged norm for functions $u\!:\bbR \times \bbZ^d \to \bbR$ by
\begin{align*}
  \Norm{u}{p,q,I \times B}
  \;\ldef\;
  \bigg(
    \frac{1}{|I|}\,
    \int_I
      \bigg(
        \frac{1}{|B|}\, \sum_{x \in B} |u(t,x)|^p
      \bigg)^{\!\!q/p}
    \md t
  \bigg)^{\!\!1/q},
  \qquad p, q \in (0, \infty),
\end{align*} 
where $|I|$ and $|B|$ denotes the Lebesgue measure of the interval and the cardinality of the set $B$, respectively.  Further, we write $B(x,r) \ldef \{y \in \bbZ^d \,:\, |y-x|_1 \leq \lfloor r \rfloor\}$ to denote the closed ball with respect to the $\ell^1$-norm with center $x \in \bbZ^d$ and radius $r \geq 0$.

For any fixed realization $\om \in \Om$, we consider the time-inhomogeneous Markov process, $X = \{X_t : t \geq 0\}$ on $\bbZ^d$ in the random environment $\om$ generated by
\begin{align}
  (\cL_t^{\om} f)(x)
  \;\ldef\;
  \sum_{y \sim x} \om_t(x,y)\, \big(f(y) \,-\, f(x) \big).
\end{align}
For any $s \in \bbR$ and $x \in \bbZ^d$, the measure $\Prob_{s, x}^{\om}$ on $\cD(\bbR, \bbZ^d)$, the space of $\bbZ^d$-valued c\`{a}dl\`{a}g functions on $\bbR$, denotes the law of the process $X$ starting at time $s$ in $x$.  In order to construct this Markov process under the law $\Prob_{s,x}^{\om}$, we specify in the sequel its jump times $s < J_1 < J_2 < \ldots$ inductively.  For this purpose, let $\{Z_k : k \geq 1\}$ be a sequence of independent $\mathop{\mathrm{Exp}}(1)$-distributed random variables, and set $J_0 = s$ and $X_s = x$.  Suppose that for any $k \geq 1$ the process $X$ has already been constructed on $[s, J_k]$.  Then, $J_{k+1}$ is defined by 
\begin{align*}
  J_{k+1}
  \;\ldef\;
  \inf
  \Big\{
    t \geq 0 \,:\, \int_{J_k}^{J_k + t}\! \mu_u^{\om}(X_{J_k})\, \md u \geq Z_{k+1}
  \Big\},
\end{align*}
and at time $t = J_k$ the random walk $X$ jumps from $z = X_{J_k}$ to any of its neighboring vertices $y$ with probability $\om_{t}(z, y) / \mu_t^{\om}(z)$.
\begin{lemma}
  For $\prob$-a.e.\ $\om$, $\Prob^{\om}_{0,0}$-a.s.\ the process  $\{X_t : t\geq 0\}$ does not explode, that is there are only finitely many jumps in finite time.
\end{lemma}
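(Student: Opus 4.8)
The plan is to prove that the explosion time $J_\infty \ldef \lim_{k\to\infty} J_k$ satisfies $J_\infty=\infty$, $\prob\otimes\Prob^\om_{0,0}$-a.s., which gives the statement after disintegration in $\om$. First I would record the identity that is built into the construction: since $X_u = X_{J_k}$ for $u\in[J_k,J_{k+1})$ and $\int_{J_k}^{J_{k+1}}\mu_u^\om(X_{J_k})\,\md u = Z_{k+1}$, the rate accumulated along the trajectory equals the clock consumed, i.e. $\int_0^{J_k}\mu_u^\om(X_u)\,\md u = \sum_{j=1}^k Z_j$ for every $k$. Letting $k\to\infty$ and using $\sum_{j\ge1}Z_j=\infty$ a.s. (strong law for i.i.d.\ $\mathrm{Exp}(1)$ variables), the event $\{J_\infty\le T\}$ forces $\int_0^{J_\infty}\mu_u^\om(X_u)\,\md u=\infty$. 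Since the integrand is nonnegative, it therefore suffices to show, for each fixed $T$, that $\mean\otimes\Prob^\om_{0,0}\big[\int_0^{T\wedge J_\infty}\mu_u^\om(X_u)\,\md u\big]<\infty$, as this is incompatible with $\prob\otimes\Prob^\om_{0,0}[J_\infty\le T]>0$.

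The engine for the finiteness is the stationarity of the environment seen from the particle. Writing $\eta_u\ldef\tau_{u,X_u}\,\om$, I would establish that for each fixed $u\ge0$ and every bounded measurable $G\colon\Om\to[0,\infty)$ one has $\mean\otimes\Prob^\om_{0,0}\big[G(\eta_u)\,\indicator_{\{u<J_\infty\}}\big]\le\mean[G]$. This is the assertion that $\prob$ is (sub-)invariant for the Markovian environment process, whose generator is the sum of the unit-speed time shift $\partial_u$ and the spatial jump part $\sum_{y\sim0}\om_0(0,y)\big(G(\tau_{0,y}\om)-G(\om)\big)$. Invariance under the first part is exactly the time-stationarity of Assumption \ref{ass:P}(i). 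For the jump part I would substitute $\om\mapsto\tau_{0,-y}\om$ and use the space-stationarity of Assumption \ref{ass:P}(i) together with the edge symmetry $\om_t(x,y)=\om_t(y,x)$ to get $\mean[\om_0(0,y)\,G(\tau_{0,y}\om)]=\mean[\om_0(0,-y)\,G(\om)]$; summing over the neighbours $y\sim0$ and noting that $y\mapsto -y$ permutes them, the sums $\sum_{y\sim0}\mean[\om_0(0,-y)G]$ and $\sum_{y\sim0}\mean[\om_0(0,y)G]$ coincide, so $\mean\big[\sum_{y\sim0}\om_0(0,y)\big(G(\tau_{0,y}\om)-G(\om)\big)\big]=0$. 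Applying the bound to $G=\mu_0^\om(0)\wedge K=\big(\sum_{y\sim0}\om_0(0,y)\big)\wedge K$ and letting $K\to\infty$ by monotone convergence yields $\mean\otimes\Prob^\om_{0,0}[\mu_u^\om(X_u)\,\indicator_{\{u<J_\infty\}}]\le\mean[\mu_0^\om(0)]=2d\,\mean[\om_0(e)]$, which is finite by Assumption \ref{ass:P}(ii) and, crucially, uniform in $u$. Integrating over $u\in[0,T]$ and using Tonelli gives $\mean\otimes\Prob^\om_{0,0}\big[\int_0^{T\wedge J_\infty}\mu_u^\om(X_u)\,\md u\big]\le 2d\,T\,\mean[\om_0(e)]<\infty$, as required.

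The main obstacle is the circularity in the second paragraph: the process $\eta_u$ and the invariance computation presuppose that $X$ is defined for all relevant times, which is exactly what is in question. I would resolve this by localization. Let $\tau_n\ldef\inf\{t\ge0:X_t\notin B(0,n)\}$. While the trajectory stays in the finite set $B(0,n)$, only the finitely many clocks attached to edges inside $B(0,n)$ can trigger jumps, and in a graphical construction each such clock fires $\mathrm{Poisson}\big(\int_0^T\om_u(e)\,\md u\big)$-many times in $[0,T]$, which is finite $\prob$-a.s.\ by Tonelli and Assumption \ref{ass:P}(ii). Hence the stopped walk $X^{\tau_n}$ never explodes, and since confinement to a bounded region forces finitely many jumps, one has $J_\infty=\lim_{n}\tau_n$ and $\{J_\infty\le T\}=\bigcap_n\{\tau_n\le T\}$ up to a null set. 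I would therefore carry out the invariance/first-moment estimate for the non-exploding stopped processes $X^{\tau_n}$, where $\eta_u$ is genuinely well-defined, and then pass to the limit $n\to\infty$ by Fatou and monotone convergence. The delicate point is to verify that the correction produced by stopping at $\partial B(0,n)$ has the favourable sign, so that one obtains the inequality $\mean\otimes\Prob[G(\eta_{u\wedge\tau_n})]\le\mean[G]$ rather than an identity with an uncontrolled boundary term; once this is in place the estimate of the second paragraph survives the limit, giving $\prob\otimes\Prob^\om_{0,0}[J_\infty\le T]=0$ for every $T$, and letting $T\to\infty$ completes the proof.
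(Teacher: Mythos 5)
The paper offers no argument of its own here: its ``proof'' is a citation of \cite[Lemma~4.1]{ACDS16}. Your overall route --- the pathwise identity $\int_0^{J_k}\mu^\om_u(X_u)\,\md u=\sum_{j=1}^k Z_j$ built into the construction, the divergence $\sum_j Z_j=\infty$, and an annealed first-moment bound on $\int_0^{T}\mu^\om_u(X_u)\,\indicator_{\{u<J_\infty\}}\,\md u$ via stationarity of the environment seen from the particle, with localization to $B(0,n)$ to break the circularity --- is exactly the standard proof in this setting and matches the spirit of the cited lemma. Two of your auxiliary steps can even be simplified: you do not need a graphical construction to see that the walk cannot explode while confined to $B(0,n)$, since on that event the accumulated rate is at most $\int_0^T\sum_{x\in B(0,n)}\mu^\om_u(x)\,\md u$, which is finite $\prob$-a.s.\ by Fubini and Assumption~\ref{ass:P}(ii), contradicting $\sum_j Z_j=\infty$; and your symmetry computation $\mean[\om_0(0,y)G(\tau_{0,y}\om)]=\mean[\om_0(0,-y)G(\om)]$ is correct.

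There is, however, one step that fails as literally stated, and it is precisely the one you defer as ``delicate'': the stopped-process inequality $\mean\otimes\Prob^\om_{0,0}\big[G(\eta_{u\wedge\tau_n})\big]\le\mean[G]$ is \emph{false} for general bounded $G\ge 0$. Stopping makes the exit configurations absorbing, so probability mass accumulates on them as $u$ grows; choosing $G$ large on those configurations makes the left-hand side exceed $\mean[G]$, and no ``favourable sign'' rescues the stopped form. What your estimate actually needs --- and what your second paragraph already uses, in the form $\indicator_{\{u<J_\infty\}}$ --- is the \emph{killed} inequality $\mean\otimes\Prob^\om_{0,0}\big[G(\eta_u)\,\indicator_{\{u<\tau_n\}}\big]\le\mean[G]$, which suffices because $\int_0^{T\wedge\tau_n}\mu^\om_u(X_u)\,\md u=\int_0^T\mu^\om_u(X_u)\,\indicator_{\{u<\tau_n\}}\,\md u$. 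For the walk killed upon leaving the finite box $B(0,n)$ this can be made rigorous with no appeal to the not-yet-constructed global process: the killed kernel solves a finite linear system with $\prob$-a.s.\ locally time-integrable coefficients, killing only removes mass, and the forward equation together with $\om_t(x,y)=\om_t(y,x)$ and the time-space stationarity of $\prob$ yields the sub-invariance directly; your generator computation is the formal shadow of this and should be run at the level of the killed finite-volume semigroup rather than deduced from an unproven invariance of the full environment process. With this replacement the argument closes: $\mean\otimes\Prob^\om_{0,0}\big[\int_0^{T\wedge\tau_n}\mu^\om_u(X_u)\,\md u\big]\le 2dT\,\mean[\om_0(e)]$ uniformly in $n$, monotone convergence in $n$, and the contradiction with $\sum_j Z_j=\infty$ on $\{J_\infty\le T\}$ give $\prob\otimes\Prob^\om_{0,0}[J_\infty\le T]=0$ for every $T$, and Fubini disintegrates this into the quenched statement.
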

\begin{proof}
  See \cite[Lemma~4.1]{ACDS16}.
\end{proof}

Note that the counting measure on $\bbZ^d$, independent of $t$, is an invariant measure for $X$.

\subsection{Main result}
We are interested in the long time behaviour of the random walk among time-dependent random conductances for $\prob$-almost every realization $\om$.  In particular, our aim is to prove a quenched invariance principle for the process $X$ in the following sense.
\begin{definition} \label{def:QFCLT}
  Set $X_t^{(n)} \ldef \frac{1}{n} X_{n^2 t}$, $t \geq 0$. We say that the \emph{Quenched Functional CLT} (QFCLT) or \emph{quenched invariance principle} (QIP) holds for $X$, if for every $T > 0$ and every bounded continuous function $F$ on the Skorohod space $\cD([0,T], \bbR^d)$, it holds that $\Mean_{0,0}^{\om}[F(X^{(n)})] \to \Mean_{0,0}^{\mathrm{BM}}[F(\Si \cdot W)]$ as $n \to \infty$ for $\prob$-a.e.\ $\om$, where $(W, \Prob_{\!0,0}^{\mathrm{BM}})$ is a Brownian motion on $\bbR^d$ starting at time $0$ in $0$ with deterministic covariance matrix $\Si^2 = \Si \cdot \Si^T$.
\end{definition}
%
For $d \geq 2$ the following result has been obtained recently in \cite{ACDS16}.
\begin{theorem}\label{thm:main:d}
  Suppose that $d \geq 2$ and Assumptions \ref{ass:P} holds.  For $t \in \bbR$ and $e \in E_d$ assume that $\mean[\om_t(e)^p] < \infty$ and $\mean[\om_t(e)^{-q}] < \infty$ for any $p, q \in [d/2, \infty]$ such that
  \begin{align}\label{eq:cond:d}
    \frac{1}{p-1} \,+\, \frac{1}{(p-1)q} \,+\, \frac{1}{q}
    \;<\;
    \frac{2}{d}.
  \end{align}
  Then, the QIP holds for $X$ with a deterministic, time-independent, non-degenerate covariance matrix $\Si^2$.
\end{theorem}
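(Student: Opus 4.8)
The plan is to establish the QIP via the corrector method, with the sublinearity of the corrector supplied by a Moser iteration; this is essentially the route of \cite{ACDS16}. First I would pass to the process of the environment seen from the particle, $\eta_t \ldef \tau_{t, X_t}\,\om$, which is a Markov process on $\Om$. Using the time-space ergodicity of Assumption~\ref{ass:P}(i) together with the fact that the counting measure is invariant for $X$, one verifies that $\prob$ is stationary and ergodic for $\eta$. Next, by the $L^2(\prob)$-theory \`a la Kipnis--Varadhan, I would construct a corrector $\chi \colon \Om \times \bbZ^d \to \bbR^d$, normalised by $\chi(\om, 0) = 0$, whose gradients are shift-covariant and square-integrable and which is characterised by the requirement that the time-dependent coordinate field $\Phi(\om, x) \ldef x + \chi(\om, x)$ be space-time harmonic, $(\partial_t + \cL_t^{\om})\Phi = 0$ in the weak sense. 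Then $M_t \ldef \Phi(\om, X_t)$ is a $\Prob_{0,0}^{\om}$-martingale, whence
\begin{align*}
  X_t \;=\; M_t \,-\, \chi(\om, X_t).
\end{align*}

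Second, under the averaged law $\overline{\Prob}[\,\cdot\,] \ldef \mean[\Prob_{0,0}^{\om}[\,\cdot\,]]$ the martingale $M$ has stationary, ergodic increments, and its predictable quadratic variation converges, by the space-time ergodic theorem, to a deterministic multiple of $t$. The functional martingale CLT (in its Lindeberg form, the jump negligibility following from square-integrability of the corrector gradient) then gives $M_t^{(n)} \ldef n^{-1} M_{n^2 t} \to \Si \cdot W_t$ in law on the Skorohod space, with $\Si^2$ the limiting covariance. Consequently the QIP for $X$ reduces to showing that the corrector is negligible on the diffusive scale, i.e.
\begin{align*}
  \frac{1}{n}\, \max_{|x|_1 \leq n} \big| \chi(\om, x) \big|
  \;\xrightarrow[n \to \infty]{}\;
  0
  \qquad \prob\text{-a.s.}
\end{align*}

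The third and decisive step is this sublinearity. Applying the spatial ergodic theorem to the stationary gradient field of $\chi$ yields sublinearity in the averaged sense, $n^{-1}\Norm{\chi(\om, \cdot)}{1, 1, I \times B(0,n)} \to 0$, and the task is to upgrade this to the uniform maximum. For this I would run a Moser iteration for nonnegative subsolutions of the heat equation $\partial_t u - \cL_t^{\om} u = 0$, combining a weighted Sobolev inequality on $(\bbZ^d, E_d)$ with H\"older's inequality to absorb the conductances. The outcome is a maximal inequality bounding $\sup_{Q'} |u|$ on an inner space-time cylinder $Q'$ by a space-time average $\Norm{u}{2, 2, Q}$ over a larger cylinder $Q$, the constant depending only on the averages $\Norm{\mu^{\om}}{p, q, Q}$ and $\Norm{\nu^{\om}}{p, q, Q}$. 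By the space-time ergodic theorem these averages converge $\prob$-a.s. to finite deterministic constants, finiteness being exactly the content of the hypotheses $\mean[\om_t(e)^p] < \infty$ and $\mean[\om_t(e)^{-q}] < \infty$; moreover the quantitative balance~\eqref{eq:cond:d} is precisely what makes the Sobolev exponent and the integrability exponents $p, q$ compatible, so that the constants produced by successive iteration steps form a convergent product and the maximal inequality holds with a scale-invariant constant. Feeding the averaged sublinearity into this maximal inequality (applied to a truncation of $\chi$) converts it into the required uniform bound.

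Finally, non-degeneracy of $\Si^2$ follows from the variational (Dirichlet-form) representation of $v^{T} \Si^2 v$, which is strictly positive for every $v \neq 0$ thanks to the lower moment bound $\mean[\om_t(e)^{-1}] < \infty$ from Assumption~\ref{ass:P}(ii); determinism and time-independence of $\Si^2$ come from the stationarity and ergodicity under $\overline{\Prob}$. I expect the main obstacle to be the third step: proving the parabolic Sobolev inequality in the degenerate random environment and verifying that condition~\eqref{eq:cond:d} closes the Moser iteration uniformly across scales, since it is here that the interplay between the degeneracy of the conductances and the prescribed moment exponents is most delicate.
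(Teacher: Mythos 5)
Your overall architecture---harmonic embedding, martingale FCLT, Moser iteration fed by an averaged sublinearity, non-degeneracy via the Dirichlet-form representation---is exactly the route of \cite{ACDS16}, which is the proof this paper cites for the theorem. However, two of your key steps contain genuine gaps. First, the corrector cannot be constructed ``\`a la Kipnis--Varadhan'': that $L^2$-theory (like the simple projection argument available for static conductances) rests on the symmetry of the generator of the environment process with respect to the invariant measure, and here the environment process $\{\tau_{t,X_t}\,\om : t \geq 0\}$ has generator $\widehat{\cL}\vp = D_0\vp + \sum_{x \sim 0}\om_0(0,x)\big(\vp(\tau_{0,x}\,\om)-\vp(\om)\big)$, whose time-derivative part $D_0$ is antisymmetric, so $\widehat{\cL}$ is \emph{not} symmetric in $L^2(\Om,\prob)$. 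The paper flags precisely this as the main obstruction; the actual construction (cf.\ \cite[Section~2]{ACDS16}, following \cite{FK99}) regularizes the equation so that the associated bilinear form becomes coercive and bounded, solves the regularized problem by Lax--Milgram, and then passes to the limit in a suitable distribution space. Relatedly, your ansatz $\Phi(\om,x) = x + \chi(\om,x)$ with a \emph{time-independent} corrector is inconsistent with the harmonicity requirement $(\partial_t + \cL_t^{\om})\Phi = 0$: the true object has the form $\chi(\om,t,x) = \chi_0(\tau_{t,0}\,\om,x) + \int_0^t \big(\cL_s^{\om}\Phi_0(\tau_{s,0}\,\om,\cdot)\big)(0)\,\md s$, i.e.\ it carries a genuine time-integral drift term, and what must vanish is $\max_{(t,x)\in[0,n^2]\times B(0,n)}\,|\chi(\om,t,x)|/n$, not the purely spatial maximum you state.

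As a consequence, your sublinearity step is incomplete: the spatial ergodic theorem applied to the shift-covariant gradients controls only the $\chi_0(\tau_{t,0}\,\om,x)$ part, while the time part $\chi(\om,t,0)$ (the integral above) requires a separate argument---in this paper it is Lemma~\ref{lemma:chi:t}, handled by a time-space ergodic theorem applied to a spatially mollified version of $\cL_s^{\om}\Phi_0$, with the analogous step in \cite{ACDS16}. Two smaller points: for the \emph{quenched} statement you cannot argue under the annealed law $\overline{\Prob}$, which only yields an annealed FCLT; instead one proves $\prob$-a.s.\ convergence of the quadratic variation via ergodicity of the environment process with invariant measure $\prob$ and then applies Helland's martingale CLT under $\Prob_{0,0}^{\om}$ for $\prob$-a.e.\ $\om$. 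And the Moser iteration is applied not to caloric functions but to $\tfrac{1}{n}\chi$ as a solution of the Poisson equation $\partial_t u + \cL_t^{\om} u = \nabla^* V_t^{\om}$; your reading of condition \eqref{eq:cond:d} as exactly what closes the iteration (making the iteration exponent exceed one and the constants form a convergent product) is correct.
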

\begin{remark}
  For static conductances a QIP holds if $\mean[\om(e)^p] < \infty$ and $\mean[\om(e)^{-q}]<\infty$ for any $p, q \geq 1$ such that $1/p + 1/q < 2/d$, see \cite{ADS15}.
\end{remark}
\begin{remark}
  The assertion of Theorem~\ref{thm:main:d} can be extended to the case that the law of the conductances satisfies different integrability conditions in time and space: For any $p, p', q, q' \in [1, \infty]$ satisfying
  \begin{align}\label{eq:cond:int:d}
    \frac{1}{p} \cdot \frac{p'}{p'-1} \cdot \frac{q'+1}{q'} \,+\, \frac{1}{q}
    \;<\;
    \frac{2}{d}
  \end{align}
  assume that 
  \begin{align}
    \lim_{n \to \infty} \Norm{\mu^{\om}}{p, p', Q(n)} \;<\; \infty
    \qquad \text{and} \qquad
    \lim_{n \to \infty} \Norm{\nu^{\om}}{q, q', Q(n)} \;<\; \infty.
  \end{align}
  Then, the QIP holds for $X$.  In particular, the integrability condition for the static RCM can be recovered from \eqref{eq:cond:int:d} by choosing $p'=q'=\infty$.
\end{remark}
In the present paper, we focus on the one-dimensional case:
\begin{theorem}\label{thm:main:1d}
  Suppose that $d = 1$ and Assumptions~\ref{ass:P} holds.  For $t \in \bbR$ and $e \in E_1$ assume that $\mean[\om_t(e)^p] < \infty$ and $\mean[\om_t(e)^{-q}] < \infty$ for any $p, q \in [1, \infty)$ such that
  \begin{align}\label{eq:cond:1d}
    \frac{1}{p-1} \,+\, \frac{1}{(p-1)q} \;<\; 1.
  \end{align}
  Then, the QIP holds for $X$ with a deterministic, time-independent variance $\si^2 > 0$.
\end{theorem}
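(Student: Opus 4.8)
The plan is to follow the \emph{corrector method}, adapting the framework developed for $d \geq 2$ in Theorem~\ref{thm:main:d} to one dimension, where the decisive simplification is a much stronger Sobolev embedding. First I would introduce the process of the environment seen from the particle, $\eta_t \ldef \tau_{t, X_t}\,\om$; since the temporal shift is built into the definition of $\tau$, this is a time-homogeneous Markov process on $\Om$, and Assumption~\ref{ass:P}(i) together with the fact (noted above) that the counting measure is invariant for $X$ shows that $\prob$ is stationary and ergodic for $\{\eta_t\}$. On $L^2(\Om, \prob)$ I would then construct a corrector $\chi\!:\bbR \times \bbZ \times \Om \to \bbR$ so that $\Phi(t,x,\om) \ldef x - \chi(t,x,\om)$ is space-time harmonic for the walk, $(\partial_t + \cL_t^{\om})\Phi = 0$; concretely $\chi$ solves the Poisson equation associated with the local drift $x \mapsto \om_t(x,x+1) - \om_t(x-1,x)$. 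The two-sided moment bounds $\mean[\om_t(e)^p], \mean[\om_t(e)^{-q}] < \infty$ ensure that $\nabla\chi$ lies in a suitable $L^1$-space and that $\chi$ satisfies the cocycle property, so that its increments are stationary under $\{\eta_t\}$.

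This yields the decomposition $X_t = M_t + \chi(t, X_t, \om)$ with $M_t \ldef \Phi(t, X_t, \om)$ a $\Prob_{0,0}^{\om}$-martingale. Rescaling as $M^{(n)}_\cdot \ldef n^{-1} M_{n^2 \cdot}$, I would apply the martingale functional CLT: the rescaled quadratic variation $n^{-2}\langle M\rangle_{n^2 t}$ converges by the ergodic theorem for $\{\eta_t\}$ to a deterministic limit $\si^2\, t$. Non-degeneracy $\si^2 > 0$ follows from the integrability of the inverse conductances $\mean[\om_t(e)^{-q}] < \infty$, which prevents the effective diffusivity from collapsing (in the static case it reduces to positivity of the harmonic mean of $\om$).

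The heart of the argument is the \emph{sublinearity} of the corrector, i.e.\ $n^{-1}\max_{0 \le t \le n^2 T} |\chi(t, X_t, \om)| \to 0$ for $\prob$-a.e.\ $\om$, $\Prob_{0,0}^{\om}$-a.s.; combined with the martingale CLT this gives $X^{(n)}_\cdot \Rightarrow \si \cdot W$ on $\cD([0,T], \bbR)$ and hence the QIP of Definition~\ref{def:QFCLT}. I would first extract an averaged, space-time $L^1$ sublinearity bound on $\chi$ directly from the ergodic theorem applied to the stationary field $\nabla\chi$. The upgrade from this averaged control to the pointwise maximal bound is exactly where condition~\eqref{eq:cond:1d} enters: running Moser's iteration for the time-dependent operator one derives a local maximal inequality controlling $\Norm{\chi}{\infty,\infty,I\times B}$ by the averaged norms $\Norm{\,\cdot\,}{p,p',I\times B}$ and $\Norm{\,\cdot\,}{q,q',I\times B}$, with the weights supplied by $\mu^{\om}$ and $\nu^{\om}$ as in \eqref{eq:cond:int:d}. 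In one dimension the operative Sobolev inequality is the elementary embedding $\max_{x}|u(x)| \le \tfrac12\sum_{x}|u(x+1)-u(x)|$, i.e.\ $\ell^{\infty} \hookleftarrow \dot W^{1,1}(\bbZ)$, which is far stronger than its higher-dimensional counterpart; this is precisely why the admissible exponents in \eqref{eq:cond:1d} are not the naive specialization of \eqref{eq:cond:d} to $d=1$ (the $1/q$ term drops out and the threshold improves from $2$ to $1$).

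I expect the main obstacle to be this last step, and within it the calibration of the iteration so that it closes under exactly \eqref{eq:cond:1d}: one must track how the integrability of $\mu^{\om}$ and $\nu^{\om}$ propagates through the iterated Sobolev--H\"older estimates on the space-time cylinders and check that the strong one-dimensional embedding lets the scheme terminate under these weaker hypotheses. A secondary difficulty is transferring the maximal bound from deterministic boxes $B(0,n)$ to the actual trajectory $\{X_t\}$, which requires an a priori control on the walk's range (an exit-time or on-diagonal heat-kernel estimate) valid for $\prob$-a.e.\ $\om$.
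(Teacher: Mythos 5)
Your overall architecture (harmonic embedding, martingale FCLT, $\ell^1$-sublinearity upgraded to $\ell^\infty$-sublinearity by Moser iteration, with the decisive $d=1$ Sobolev embedding $\ell^\infty \hookleftarrow \dot W^{1,1}$ weighted by $\nu^\om$ via Cauchy--Schwarz) matches the paper, and your diagnosis of why \eqref{eq:cond:1d} differs from the naive $d=1$ specialization of \eqref{eq:cond:d} is exactly the paper's point. But there is a genuine gap at the very first step: you construct the corrector by solving, on $L^2(\Om,\prob)$, the Poisson equation for the local drift with respect to the environment process $\eta_t = \tau_{t,X_t}\,\om$. That is the standard \emph{static} projection (Kipnis--Varadhan) argument, and it is precisely the strategy that fails here: the time-space process $\{(t,X_t)\}$ is not reversible, so the generator $\widehat{\cL}$ of $\{\eta_t\}$ is \emph{not} symmetric with respect to the invariant measure $\prob$, and the associated bilinear form gives you no direct solvability of the Poisson equation nor an orthogonal decomposition adapted to it. The paper (following \cite{ACDS16}, inspired by an argument of Fannjiang--Komorowski) instead regularizes the equation so that the bilinear form becomes coercive and bounded, solves the regularized problem by Lax--Milgram, and obtains $\Phi_0 \in L^2_{\mathrm{cov}}$ with $\chi_0 = \Pi - \Phi_0 \in L^2_{\mathrm{pot}}$ by passing to the limit in a distribution space; note also that in the dynamic $d=1$ model the harmonic coordinate admits no explicit formula (unlike the static one-dimensional case), so there is no shortcut around this construction. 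Your appeal to the two-sided moment bounds to get "$\nabla\chi$ in a suitable $L^1$-space" does not substitute for it.

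Two further points where your sketch underestimates the work. First, the $\ell^1$-sublinearity is not obtained "directly from the ergodic theorem applied to the stationary field $\nabla\chi$": the spatial increments are handled that way (cocycle property plus Birkhoff), but the \emph{time} direction requires the representation \eqref{eq:chi:split}, $\chi(\om,t,0) = \int_0^t (\cL_s^\om \Phi_0(\tau_{s,0}\,\om,\cdot))(0)\,\md s$, whose integrand is a time-space stationary field with nonzero pointwise value; the paper kills it by testing against the hat function $f_t(x)=[1-|x/\sqrt{t}|]_+$, summing by parts so that only the mean-zero discrete gradient $g_z$ survives, and invoking an \emph{extension} of Birkhoff's theorem for time-space averages (see Lemma~\ref{lemma:chi:t} and \cite{BD03}) -- this is the step the paper advertises as its simplification, and it is absent from your plan. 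Second, your "secondary difficulty" is a detour: no exit-time or heat-kernel estimate is needed to transfer the maximal bound to the trajectory. Since $X_t = M_t + \chi(\om,t,X_t)$, the $\ell^\infty$-bound of Proposition~\ref{prop:maximal:inequality} over deterministic boxes $Q(n)$, combined with tightness of the rescaled martingale from Proposition~\ref{prop:QIP:martingal}, self-improves to $\sup_{t\le T} |\tfrac1n \chi(\om,tn^2,X_{tn^2})| \to 0$ in $\Prob^\om_{0,0}$-probability, exactly as in \cite{ADS15}; proving heat-kernel bounds under the weak moment conditions \eqref{eq:cond:1d} would itself require a separate Moser-type analysis and is not available off the shelf.
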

\begin{figure}
  \begin{center}
    \includegraphics[width=215pt]{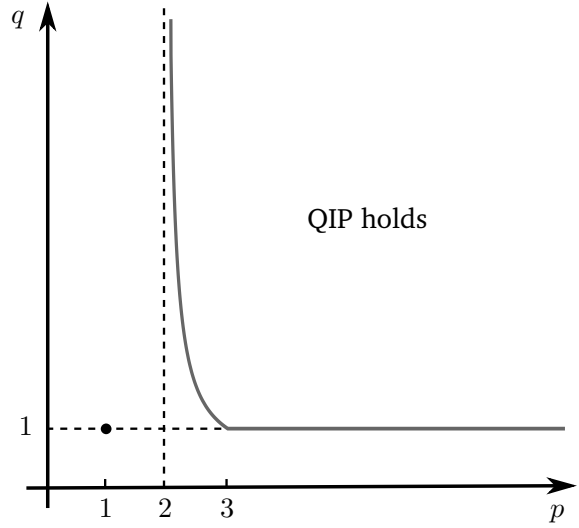}
  \end{center}
  \caption{Illustration of the area where the condition \eqref{eq:cond:1d} is satisfied}
   \label{fig:pq-area:1d}
\end{figure}
\begin{remark}
  Note that for all $p \in (2, \infty]$, \eqref{eq:cond:1d} is satisfied provided that $q > 1/(p-2)$.  Hence, in view Assumption~\ref{ass:P}, it suffices to choose $q=1$ for all $p \in (3, \infty]$, see also Fig.~\ref{fig:pq-area:1d}.  If one sets $d=1$ in \eqref{eq:cond:d}, we see that \eqref{eq:cond:1d} is equivalent to \eqref{eq:cond:d} for $q=1$ only.  This fact relies on the special shape of the Sobolev inequality in $d=1$ and will be explained below.
\end{remark}
\begin{remark}
  The assertion of Theorem~\ref{thm:main:1d} can also be extended to the case that the law of the conductances satisfies different integrability conditions in time and space: For any $p, p', q' \in [1, \infty]$ satisfying
  \begin{align}\label{eq:cond:int:1d}
    \frac{1}{p} \cdot \frac{p'}{p'-1} \cdot \frac{q'+1}{q'}
    \;<\;
    1
  \end{align}
  assume that 
  \begin{align}\label{eq:int:Exp:1d}
    \lim_{n \to \infty} \Norm{\mu^{\om}}{p, p', Q(n)} \;<\; \infty
    \qquad \text{and} \qquad
    \lim_{n \to \infty} \Norm{\nu^{\om}}{1, q', Q(n)} \;<\; \infty,
  \end{align}
  where $Q(n) \ldef [0, n^2] \times B(0, n)$.  Then, the QIP holds for $X$.  In particular, for static conductances, i.e.\ $p' = \infty$ and $q' = \infty$, a QIP holds provided that $p > 1$.  Note that in case $p' \geq p$, the ergodic theorem shows that \eqref{eq:int:Exp:1d} is satisfied whenever $\mean[\om_t(e)^{p'}] < \infty$ and $\mean[\om_t(e)^{-q'}]<\infty$.  
\end{remark}
\begin{remark}
  Based on personal communication with Marek Biskup, we expect that a quenched invariance principle under optimal integrability conditions, namely $\mean[\om_t(e)] < \infty$ and $\mean[\om_t(e)^{-1}] < \infty$, can be proven by adapting the strategy that has been successfully used in the two-dimensional static RCM, cf.\cite[Theorem~4.2]{Bi11}.  In contrast to the static RCM, the harmonic coordinate -- an essential ingredient in the proof -- can not be constructed explicitely for time-dependent conductances in the one-dimensional model.
\end{remark}
The strategy of the proof of the QIP is rather standard and based on \emph{harmonic embedding}, see \cite{Bi11} for a detailed exposition of this method in the static situation.  A key ingredient is to decompose the process $X_t = \Phi(\om, t, X_t) + \chi(\om, t, X_t)$ such that the process $M_t = \Phi(\om, t, X_t)$ is a martingale under $\Prob_{0,0}^{\om}$ with respect to the filtration $\cF_t = \si(X_s, s \leq t)$, where the random function $\Phi\!:\Om \times \bbR \times \bbZ^d \to \bbR^d$, also called \emph{harmonic coordinate}, solves for $\prob$-a.e.\ $\om$ the following parabolic equation
\begin{align}
  \partial_t \Phi(\om, t, x) \,+\, \cL_t^{\om}\Phi(\om, t, x) \;=\; 0,
  \qquad 
  \Phi(\om, 0, 0) \;=\; 0.
\end{align}
The random function $\chi\!:\Om \times \bbR \times \bbZ^d \to \bbR^d$ is also known as the \emph{corrector}.  A QIP for the martingale part can be easily obtained by standard methods.  In order to obtain a QIP for the process $X$, by Slutsky's theorem, it suffices to verify that for any $T > 0$ and $\prob$-a.e.\ $\om$
\begin{align}
  \sup_{0 \leq t \leq T} \big|\tfrac{1}{n}\, \chi(\om, t n^2, X_{t n^2})\big|
  \underset{n \to \infty}{\;\longrightarrow\;}
  0
  \qquad \text{in $\Prob_{0,0}^{\om}$-probability},
\end{align}
which can be deduced from the $\ell^{\infty}$-sublinearity of the corrector:
\begin{align}\label{eq:sublinearity:infty}
  \lim_{n \to \infty} \max_{(t,x) \in Q(n)} \big|\tfrac{1}{n}\, \chi(\om, t, x)\big|
  \;=\;
  0,
  \qquad \prob\text{-a.s.}
\end{align}
The main challenge in the proof of the QIP for random walks among time-dependent random conductances is both the construction of the corrector and to prove \eqref{eq:sublinearity:infty}.  For static conductances the construction of the corrector is based on a simple projection argument exploiting the symmetry of the generator of the \emph{process as seen from the particle}.  However, for time-dependent environments this strategy fails, since the time-space process $\{(t, X_t) : t \geq 0\}$ is not reversible and hence the generator, $\widehat{\cL}$, corresponding to the process $\{\tau_{t, X_t} \om : t \geq 0\}$ is not symmetric with respect to the invariant measure $\prob$.  For this reason, the actual construction of the corrector is more involved, cf.\ \cite[Section~2]{ACDS16} and based on the following argument.  First, by adding of suitable regularisation, the bilinear form associated to $\widehat{\cL}$ is coercive and bounded, and the existence of a regularised version of the corrector is guaranteed by the Lax-Milgram lemma.  The corrector is obtained in a second step by passing to the limit in a suitable sense.

The $\ell^{\infty}$-sublinearity of the rescaled corrector $\frac{1}{n} \chi$ follows from Moser's iteration scheme, which allows to bound $\|\frac{1}{n} \chi\|_{\infty,\infty,Q(n)}$ from above in terms of $\|\frac{1}{n} \chi\|_{1,1,Q(2n)}$.  Thus, the $\ell^{\infty}$-sublinearity can be deduced from the $\ell^1$-sublinearity of the corrector.  The proof of the latter is based on Birkhoff's pointwise ergodic theorem.  One purpose of this note is to present a simplified proof of the $\ell^{1}$-sublinearity in $d=1$ which will greatly ease the corresponding proof in higher dimensions.

Moser's iteration is based on two main ingredients: a Sobolev inequality which allows to control for a suitable  $r > 2$ the $\ell^r(\bbZ^d)$-norm of a function $f$ in terms of the Dirichlet form (cf.\ Lemma~\ref{lemma:sobolev}) and an energy estimate for solutions of a certain class of Poisson equations.  It is well known that Sobolev inequalities can be deduced from the isoperimetric properties of the underlying space.  On $\bbZ^d$, there exists a large variety of Sobolev inequalities. Writing $|\nabla f(x,y)| = |f(x) - f(y)|$ for $\{x,y\} \in E_d$, the form of such inequalities is dimension depending.  More precisely, for any $f\!: \bbZ^d \to \bbR$ with compact support, it holds that for $1 \leq \al < d$
\begin{align}\label{eq:sobolev:alpha<d}
  \norm{\mspace{1mu}f\mspace{1mu}}{\frac{d\al}{d-\al}}{\bbZ^d}
  \;\leq\;
  C(d, \al)\, \norm{\,|\nabla f|\,}{\al}{E_d},
\end{align}
whereas for $\al > d$ the shape of the inequalities changes:
\begin{align}\label{eq:sobolev:d<alpha}
  \norm{\mspace{1mu}f\mspace{1mu}}{\infty}{\bbZ^d}
  \;\leq\;
  C(d, \al)\,
  \norm{\mspace{1mu} f\mspace{1mu}}{\al}{\bbZ^d}^{1 - (d/\al)}\;
  \norm{\,|\nabla f|\,}{\al}{E_d}^{d/\al}.
\end{align}
For the RCM with \emph{uniform elliptic} conductances on $\bbZ^d$ with $d \geq 3$, the Sobolev inequality with $r = r(d) = d/(d-2)$ is immediate from \eqref{eq:sobolev:alpha<d}.  In the case where the conductances are \emph{elliptic}, that is $\om_t(e) \in (0, \infty)$ for all $t \in \bbR$ and $e \in E_d$, starting from \eqref{eq:sobolev:alpha<d} with a suitable $\al < 2 \leq d$, we obtain by means of H\"older's inequality a weighted Sobolev inequality in space with $r = r(d,q) = d/(d-2+d/q)$, see \cite[Proposition~5.4]{ACDS16}.  In the case $d=1$, by using the Cauchy-Schwarz inequality, we immediately obtain a weighted Sobolev inequality from \eqref{eq:sobolev:d<alpha} with $r = \infty$.  
\smallskip

Random motion in random environments has attracted much interest during the past decades.  In particular, the question whether an annealed or quenched invariance principle holds has been studied intensively.  For dynamic random environments, an annealed invariance principle was first shown in \cite{Ma86} for a one-dimensional random walk in a random environment that is i.i.d.\ in space and Markovian in time.  By using analytic, probabilistic and ergodic techniques, annealed and quenched invariance principles have been established by now for various models falling mostly in one of the following two categories: \emph{independent in time} \cite{Be04, BMP97, BMP04, JR11, RS05} or \emph{independent in space and Markovian in time} \cite{BBMP98, BMP00, St04,BZ06, DL09}.  In all these models a \emph {good mixing} behaviour of the environment, i.e. the polynomial decay of time-space correlations, remained a major requirement.

For the RCM with time-space ergodic conductances an annealed and quenched invariance principle has been first proven in \cite{An14} in the uniform elliptic, polynomial mixing case. Recently, the assumptions on the law of the environment has been significantly relaxed. In \cite{ACDS16} a QFLCT has been proven in $d \geq 2$ for the dynamic RCM with elliptic, time-space ergodic conductances satisfying a certain integrability condition.  A similar quenched result in the non-elliptic case for general ergodic environments under suitable moment conditions has been obtained recently in \cite{DGR15} for random walks in time-dependent balanced environments, that is
\begin{align*}
  \om_t(x, x+e_i) \;=\; \om_t(x, x-e_i),
  \qquad \forall\, i = 1, \ldots, d.
\end{align*}
The discrete-time random walk among time-dependent conductance behaves quite differently even in the uniform elliptic case, in particular anomalous heat kernel behaviour occurs, cf.\ \cite{HK15}.
\medskip

The paper is organized as follows: In Section~\ref{sec:harm}, after recalling the construction of the corrector for arbitrary $d \geq 1$, we prove the convergence of the martingale part.  Then, in Section~\ref{sec:sublinearity} we show in dimension $d=1$ that the corrector is sublinear.
\smallskip

Throughout this paper we suppose that Assumption~\ref{ass:P} holds.

\section{Harmonic embedding and the corrector}
\label{sec:harm}
In the sequel, we discuss for any $d \geq 1$ the construction of a corrector $\chi$ to the time-inhomogeneous process $X$ such that $M_t = X_t - \chi(\om, t, X_t)$ is a martingale under $\Prob_{0,0}^{\om}$ for $\prob$-a.e.\ $\om$ , and we prove a quenched invariance principle for the martingale part.
\begin{definition} 
  A measurable function, also called random field, $\Psi\!: \Om \times \bbZ^d \to \bbR^d$ satisfies the (space) cocycle property if for $\prob$-a.e. $\om$
  \begin{align*}
    \Psi(\tau_{0, x\,}\om, y-x)
    \;=\;
    \Psi(\om, y) - \Psi(\om, x),
    \qquad \text{for } x, y \in \bbZ^d.
  \end{align*}
  We denote by $L^2_\mathrm{cov}$ the set of functions $\Psi\!: \Om \times \bbZ^d \to \bbR^d$ satisfying the (space) cocycle property such that
  \begin{align*}
    \Norm{\Psi}{L_\mathrm{cov}^2}^2
    \;\ldef\;
    \mean\!\Big[
      {\textstyle \sum_{x \sim 0}}\, \om_0(0, x)\, |\Psi(\om,x)|^2
    \Big]
    \;<\;
    \infty,
  \end{align*}
  where $| \cdot |$ denotes the usual Euclidean $2$-norm in $\bbR^d$.
\end{definition} 
The position field $\Pi\!: \Om \times \bbZ^d \to \bbR^d$ is defined by $\Pi(\om, x) \ldef x$.  Observe that $\Pi$ is an element of $L_{\mathrm{cov}}^2$, since $\Pi(\om, x+y) - \Pi(\om, x) = \Pi(\tau_{0,x\,} \om ,y)$ for all $\om \in \Om$ and any $x, y \in \bbZ^d$ and $\|\Pi\|_{L_\mathrm{cov}^2} = \mean[\mu_0^{\om}(0)]^{1/2} < \infty$.

We associate to $\vp\!: \Om \to \bbR^d$ a (space) gradient $\mD \vp\!: \Om \times \bbZ^d \to \bbR^d$ defined by
\begin{align*}
  \mD \vp (\om,x)
  \;=\;
  \vp(\tau_{0,x\,} \om) - \vp(\om),
  \qquad x \in \bbZ^d.
\end{align*}
Obviously, if the function $\vp$ is bounded, $\mD \vp$ is an element of $L_{\mathrm{cov}}^2$.  Note that $L_{\mathrm{cov}}^2$ is a Hilbert space.  Further, let us introduce an orthogonal decomposition of the space $L_{\mathrm{cov}}^2 = L_{\mathrm{pot}}^2 \oplus L_{\mathrm{sol}}^2$, where
\begin{align*}
  L_{\mathrm{pot}}^2
  \;=\;
  \mathop{\mathrm{cl}}
  \big\{ \mD \vp \mid \vp\!: \Om \to \bbR\; \text{ bounded} \big\}
  \;\text{ in }\;  L_{\mathrm{cov}}^2,
\end{align*}
being the closure in $L_{\mathrm{cov}}^2$ of the set gradients and $L_{\mathrm{sol}}^2$ be the orthogonal complement of $L_{\mathrm{pot}}^2$ in $L_{\mathrm{cov}}^2$.  Further, set $T_t \vp \ldef \vp \circ \tau_{t,0}$ for $t \in \bbR$ and define the following operator, also called (time) gradient, $D_0\!: \mathop{\mathrm{dom}}(D_0) \subset L^2(\Om, \prob) \to L^2(\Om, \prob)$ by
\begin{align}
  D_0 \vp
  \;\ldef\;
  \lim_{t \to 0}\, \frac{1}{t}\big( T_t \vp \,-\, \vp \big)
\end{align}
where $\mathop{\mathrm{dom}}(D_0)$ is the set of all $\vp \in L^2(\Om, \prob)$ such that the limit above exists.  Notice that $\{T_t \,:\, t \in \bbR\}$ is a strongly continuous contraction group on $L^2(\Om, \prob)$, cf.\ \cite[Section 7.1]{ZKO94}, with infinitesimal generator $D_0$. In particular, $\mathop{\mathrm{dom}}(D_0)$ is dense in $L^2(\Om, \prob)$.  As a consequence, for any $\vp \in \mathop{\mathrm{dom}}(D_0)$ the function $t \mapsto \vp(\tau_{t,0\,} \om)$ is weakly differentiable for $\prob$-a.e.\ $\om$.

A key ingredient of the proof is the existence of a random coordinate system $\Phi(\om, t, x)$ which is known as harmonic coordinates.
\begin{theorem}
  There exists $\Phi_0 \in L_{\mathrm{cov}}^2$ which is characterized by the following properties:
  \begin{enumerate}
  \item[(i)] the function $\chi_0 \ldef \Pi - \Phi_0 \in L_{\mathrm{pot}}^2$; 
  \item[(ii)] (time-space) harmonicity of the function $\Phi\!: \Om \times \bbR \times \bbZ^d \to \bbR^d$,
    \begin{align}\label{eq:Phi:representation}
      \Phi(\om, t, x)
      \;=\;
      \Phi_0(\tau_{t,0\,} \om, x)
      \,-\,
      \int_0^t \big(\cL_s^{\om} \Phi_0(\tau_{s,0\,}\om, \cdot)\big)(0)\, \md s
    \end{align}
    in the sense that $\Phi$ is differentiable for almost every $t \in \bbR$ and
    \begin{align} \label{eq:Phi:pde}
      \partial_t \Phi(\om,t,x) \,+\, \cL^\om_t \Phi(\om,t,x) \;=\; 0,
      \qquad 
      \Phi(\om, 0, 0) \;=\; 0.
    \end{align}
  \end{enumerate}
\end{theorem}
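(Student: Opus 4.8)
The plan is to obtain $\Phi_0$ through its corrector part $\chi_0 = \Pi - \Phi_0$, which I would construct by a regularized Lax--Milgram argument adapted to the \emph{non-reversible} time-space environment process, exactly along the lines announced after the statement and carried out in \cite[Section~2]{ACDS16}. The first task is to turn the two characterizing properties into a single weak equation. Differentiating the representation \eqref{eq:Phi:representation} in $t$ and using the shift identity $\tau_{0,x}\tau_{t,0} = \tau_{t,x}$ together with the cocycle property, a direct computation shows that the harmonicity \eqref{eq:Phi:pde} is equivalent to
\[
  D_0 \Phi_0(\cdot, x) \,+\, \mD g(\cdot, x) \;=\; 0 \quad \prob\text{-a.s., for all } x \in \bbZ^d,
\]
where $g(\om) = \sum_{z \sim 0} \om_0(0,z)\, \Phi_0(\om, z) = (\cL_0^{\om}\Phi_0(\om, \cdot))(0)$ is the discrete divergence of $\Phi_0$ (the very integrand appearing in \eqref{eq:Phi:representation}). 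Inserting $\Phi_0 = \Pi - \chi_0$ and writing $\chi_0 = \mD u$ with $\mD u \in L^2_{\mathrm{pot}}$, this becomes, since $D_0$ and $\mD$ commute and $\mD$ annihilates only shift-invariant (hence, by ergodicity, constant) functions, the single equation $D_0 u + L_\om u = b$ for the environment-process generator $D_0 + L_\om$, where $L_\om \vp(\om) = \sum_{x \sim 0}\om_0(0,x)(\vp(\tau_{0,x}\om) - \vp(\om))$ and $b(\om) = \sum_{x \sim 0}\om_0(0,x)\,x$ is the local drift; stationarity and the symmetry $\om_0(x,y)=\om_0(y,x)$ give $\mean[b] = 0$, so no centering constant survives.

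Next I would set up the variational problem. In weak form, we seek $u \in \mathrm{dom}(D_0)$ with $\mD u \in L^2_{\mathrm{pot}}$ such that for every bounded test function $\ph \in \mathrm{dom}(D_0)$,
\[
  \cE(u, \ph) \,-\, \mean\big[\langle D_0 u, \ph\rangle\big] \;=\; -\,\mean\big[\langle b, \ph\rangle\big],
  \qquad
  \cE(u, \ph) \;=\; \tfrac{1}{2}\,\mean\Big[{\textstyle\sum_{x \sim 0}}\, \om_0(0,x)\,\langle \mD u(\om,x), \mD \ph(\om,x)\rangle\Big],
\]
using the Dirichlet-form identity $\mean[\langle L_\om u, \ph\rangle] = -\cE(u,\ph)$. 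The bilinear form splits into the symmetric, positive piece $\cE$ and the antisymmetric unbounded piece $\mean[\langle D_0 u, \ph\rangle]$, the latter because $D_0^\ast = -D_0$ on $L^2(\Om, \prob)$ by stationarity. Boundedness of $\cE$ and of the functional $\ph \mapsto \mean[\langle b, \ph\rangle]$ follows from Assumption~\ref{ass:P}(ii) (finiteness of $\mean[\om_t(e)]$). Since $D_0$ is unbounded and antisymmetric, the form is neither bounded nor coercive on $L^2_{\mathrm{cov}}$, so the static projection argument and a naive Lax--Milgram both fail; I would therefore add a double regularization $\lambda, \ve > 0$, namely a mass term $\lambda\mean[\langle u, \ph\rangle]$ and a time-elliptic term $\ve\,\mean[\langle D_0 u, D_0 \ph\rangle]$. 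The resulting form $B_{\lambda,\ve}$ is bounded and coercive on the Hilbert space $\{u \in \mathrm{dom}(D_0)\}$ equipped with the graph norm $\|u\|_{L^2}^2 + \ve\|D_0 u\|_{L^2}^2 + \cE(u,u)$, so Lax--Milgram yields a unique regularized corrector $u_{\lambda,\ve}$.

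Finally I would extract uniform estimates and pass to the limit. Testing against $u_{\lambda,\ve}$ itself kills the antisymmetric term, $\mean[\langle D_0 u_{\lambda,\ve}, u_{\lambda,\ve}\rangle] = 0$, leaving
\[
  \lambda\,\|u_{\lambda,\ve}\|_{L^2}^2 \,+\, \ve\,\|D_0 u_{\lambda,\ve}\|_{L^2}^2 \,+\, \cE(u_{\lambda,\ve}, u_{\lambda,\ve})
  \;=\; -\,\mean\big[\langle b, u_{\lambda,\ve}\rangle\big],
\]
which, via Cauchy--Schwarz and the cocycle control of $\cE(u_{\lambda,\ve}, u_{\lambda,\ve}) = \|\mD u_{\lambda,\ve}\|_{L^2_{\mathrm{cov}}}^2$, bounds the gradients $\mD u_{\lambda,\ve}$ in $L^2_{\mathrm{cov}}$ uniformly in $(\lambda, \ve)$. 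Sending $\ve \to 0$ and then $\lambda \to 0$, the regularizing terms vanish and a weak limit $\chi_0 \in L^2_{\mathrm{pot}}$ of $\mD u_{\lambda,\ve}$ exists (as $L^2_{\mathrm{pot}}$ is closed, hence weakly closed). Setting $\Phi_0 \ldef \Pi - \chi_0 \in L^2_{\mathrm{cov}}$ gives (i) by construction, while passing to the weak limit in the regularized equation yields the weak form of harmonicity; the representation \eqref{eq:Phi:representation} then defines $\Phi$, and since $t \mapsto \Phi_0(\tau_{t,0\,}\om, x)$ is weakly differentiable for $\prob$-a.e.\ $\om$ (as recorded for $\mathrm{dom}(D_0)$), differentiating recovers the pointwise identity \eqref{eq:Phi:pde}, with $\Phi(\om,0,0)=0$ immediate from $\Phi_0(\cdot, 0)=0$. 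Uniqueness follows since the difference of two solutions is a potential with vanishing energy $\cE$. The main obstacle is precisely the non-reversibility: the unbounded antisymmetric $D_0$ forces the time-regularization $\ve$, and the delicate points are securing $L^2_{\mathrm{cov}}$-bounds on $\mD u_{\lambda,\ve}$ uniform as both $\lambda,\ve \to 0$, controlling the order of limits so the regularizers disappear while the antisymmetric term converges, and upgrading the integrated-in-time weak harmonicity to the pointwise differential equation \eqref{eq:Phi:pde}; the spatial parts are handled as in the reversible static model.
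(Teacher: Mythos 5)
Your proposal is correct and follows essentially the same route as the paper, which itself only sketches the argument and defers to \cite[Section~2]{ACDS16} (following \cite{FK99}): recast harmonicity as a stationary equation for the non-symmetric environment generator $D_0 + L_\om$, solve a regularized weak problem via Lax--Milgram on a graph-norm Hilbert space, obtain energy bounds uniform in the regularization because the antisymmetric term vanishes on the diagonal, and pass to a weak limit of the gradients in the weakly closed subspace $L^2_{\mathrm{pot}}$, recovering \eqref{eq:Phi:representation} and \eqref{eq:Phi:pde} from weak time-differentiability on $\mathop{\mathrm{dom}}(D_0)$. One small clarification on a step you gloss: a naive Cauchy--Schwarz on $\mean[\langle b, u_{\lambda,\ve}\rangle]$ is \emph{not} uniform in $\lambda$ --- you must first rewrite the drift as a divergence by stationarity, $\mean[\langle b, \ph\rangle] = -\tfrac{1}{2}\,\mean\big[\sum_{x \sim 0} \om_0(0,x)\, x \cdot \mD\ph(\om,x)\big]$, so that the right-hand side is bounded by $\tfrac{1}{2}\Norm{\Pi}{L^2_{\mathrm{cov}}}\,\cE(u_{\lambda,\ve},u_{\lambda,\ve})^{1/2}$ and Young's inequality closes the estimate, which is presumably what your ``cocycle control'' is meant to capture.
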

\begin{proof}
  The proof, inspired by an argument given in in \cite{FK99}, is based on an application of the Lax-Milgram Theorem in order to solve in a first step a regularized version of Equation~\eqref{eq:Phi:pde}.  By taking limits in a suitable distribution space, we construct out of the solution to the regularized equation the harmonic coordinate.
  
  For a detailed proof we refer to \cite[Section~2]{ACDS16}.
\end{proof}
\begin{definition}
  The corrector $\chi\!: \Om \times \bbR \times \bbZ^d \to \bbR^d$ is defined as
  \begin{align*}
    \chi(\om, t, x) \;\ldef\; \Pi(\om,x) \,-\, \Phi(\om,t,x).
  \end{align*}
\end{definition}
In the following corollary we summarize properties of $\chi$ and $\chi_0$.
\begin{corro}\label{corro:property:chi}
  Let $\chi_0 \in L_{\mathrm{pot}}^2$ be defined as in the previous theorem.  Then,
  \begin{enumerate}
  \item[(i)] $\chi_0 \in L^1(\prob)$ with $\mean[\chi_0(\om, \hat{e})] = 0$ for all $\hat{e} \in \{\pm e_1, \ldots, e_d\}$;
  \item[(ii)] for $\prob$-a.e.\ $\om$, $t \in \bbR$ and $x \in \bbZ^d$, the corrector can be written as
    \begin{align}\label{eq:chi:split}
      \chi(\om, t, x)
      \;=\;
      \chi_0(\tau_{t,0\,}\om, x)
      \,+\,
      \int_0^t \big(\cL_s^{\om} \Phi_0(\tau_{s,0\,}\om, \cdot)\big)(0)\, \md s.
    \end{align}
  \end{enumerate}
\end{corro}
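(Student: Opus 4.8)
The plan is to treat the two assertions separately: (ii) is a direct computation from the definitions, while (i) combines a Cauchy--Schwarz integrability bound with an approximation argument in $L_{\mathrm{cov}}^2$. For (ii) I would simply unfold the definitions. By construction $\chi(\om,t,x) = \Pi(\om,x) - \Phi(\om,t,x)$, and $\Pi(\om,x)=x$ does not depend on $\om$. Substituting the representation \eqref{eq:Phi:representation} for $\Phi$ yields
\[
  \chi(\om,t,x)
  \;=\;
  x \,-\, \Phi_0(\tau_{t,0\,}\om,x)
  \,+\, \int_0^t \bigl(\cL_s^\om \Phi_0(\tau_{s,0\,}\om,\cdot)\bigr)(0)\,\md s .
\]
Since $\Pi(\tau_{t,0\,}\om,x)=x$ as well, the first two terms equal $\chi_0(\tau_{t,0\,}\om,x) = \Pi(\tau_{t,0\,}\om,x)-\Phi_0(\tau_{t,0\,}\om,x)$, which is precisely \eqref{eq:chi:split}. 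No estimate is required here.

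For the integrability in (i), the key point is that membership in $L_{\mathrm{cov}}^2$ already forces the nearest-neighbour values to be integrable. For any $\Psi \in L_{\mathrm{cov}}^2$ and any neighbour $\hat e$ of the origin, factoring $1 = \om_0(0,\hat e)^{1/2}\,\om_0(0,\hat e)^{-1/2}$ and applying the Cauchy--Schwarz inequality gives, using that a single summand is bounded by the full $L_{\mathrm{cov}}^2$-norm,
\[
  \mean\bigl[|\Psi(\om,\hat e)|\bigr]
  \;\leq\;
  \mean\bigl[\om_0(0,\hat e)\,|\Psi(\om,\hat e)|^2\bigr]^{1/2}\,
  \mean\bigl[\om_0(0,\hat e)^{-1}\bigr]^{1/2}
  \;\leq\;
  \Norm{\Psi}{L_{\mathrm{cov}}^2}\,
  \mean\bigl[\om_0(0,\hat e)^{-1}\bigr]^{1/2}.
\]
The right-hand side is finite by Assumption~\ref{ass:P}(ii), so $\chi_0(\cdot,\hat e)\in L^1(\prob)$; writing $\chi_0(\om,x)$ as a telescoping sum of increments along a lattice path from $0$ to $x$ and invoking the cocycle property together with stationarity, this propagates to $\chi_0(\cdot,x)\in L^1(\prob)$ for every $x\in\bbZ^d$.

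It remains to verify that $\mean[\chi_0(\om,\hat e)]=0$. Since $\chi_0\in L_{\mathrm{pot}}^2$, there exist bounded functions $\vp_n\!:\Om\to\bbR$ with $\mD\vp_n \to \chi_0$ in $L_{\mathrm{cov}}^2$. For each $n$, the invariance of $\prob$ under the space shift $\tau_{0,\hat e}$ yields
\[
  \mean\bigl[\mD\vp_n(\om,\hat e)\bigr]
  \;=\;
  \mean\bigl[\vp_n(\tau_{0,\hat e\,}\om)\bigr] - \mean\bigl[\vp_n(\om)\bigr]
  \;=\;
  0.
\]
By the bound above, $\Psi \mapsto \mean[\Psi(\cdot,\hat e)]$ is a bounded, hence continuous, linear functional on $L_{\mathrm{cov}}^2$, so passing to the limit gives $\mean[\chi_0(\om,\hat e)] = \lim_{n\to\infty}\mean[\mD\vp_n(\om,\hat e)] = 0$.

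The main (and essentially only) obstacle is this last continuity step, i.e.\ carrying the zero-mean property through the $L_{\mathrm{cov}}^2$-closure. This is exactly where the lower moment bound $\mean[\om_0(e)^{-1}]<\infty$ of Assumption~\ref{ass:P}(ii) is indispensable: without it the unweighted expectation $\mean[\Psi(\cdot,\hat e)]$ need not be well defined on all of $L_{\mathrm{cov}}^2$, and the gradient approximation could not be transferred to the limit. Everything else is either purely definitional (part (ii)) or a routine use of stationarity.
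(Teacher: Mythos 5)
Your proof is correct and matches the intended argument: the paper states this corollary without giving its own proof (the construction and its properties are deferred to \cite[Section~2]{ACDS16}), and what you wrote is exactly the standard derivation. Part (ii) is indeed purely definitional --- subtract \eqref{eq:Phi:representation} from $\Pi$ and use $\Pi(\tau_{t,0\,}\om,x)=\Pi(\om,x)=x$ --- and for part (i) your weighted Cauchy--Schwarz bound $\mean[|\Psi(\om,\hat e)|]\leq \Norm{\Psi}{L_{\mathrm{cov}}^2}\,\mean[\om_0(0,\hat e)^{-1}]^{1/2}$ is the same device the authors use later (e.g.\ in Lemmas~\ref{lemma:chi_0:directional} and~\ref{lemma:chi_0:1}): it makes $\Psi\mapsto\mean[\Psi(\cdot,\hat e)]$ a continuous linear functional on $L_{\mathrm{cov}}^2$, so the zero mean of the approximating gradients $\mD\vp_n$, which holds by space-shift stationarity, passes to the limit $\chi_0$, and your telescoping-plus-cocycle step correctly extends the integrability from $\hat e$ to arbitrary $x$.
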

Define $M_t \ldef \Phi(\om, t, X_t)$ for any $t \geq 0$ and $\om \in \Om$.  In view of \eqref{eq:Phi:pde}, it follows that for $\prob$-a.e.\ $\om$ and any $v \in \bbR^d$ the processes $M = \{M_t \,:\, t \geq 0\}$ and $v \cdot M$ are $\prob_{0,0}^{\om}$-martingales with respect to the filtration $\cF_t = \si(X_s, s \leq t)$.  Moreover, the quadratic variation process of the latter is given by
\begin{align}\label{eq:martingale:QV}
  \langle v \cdot M \rangle_t
  \;=\;
  \int_0^t
  \sum_{y \in \bbZ^d} (\tau_{s, X_s\,}\om)_0(0, y)\,
  \big( v \cdot \Phi_0(\tau_{s,X_s\,} \om, y)\big)^2\,
  \md s.
\end{align}
In the next proposition we show both the convergence of the martingale part and the non-degeneracy of the limiting covariance matrix.
\begin{prop}[QIP for the martingale part]\label{prop:QIP:martingal}
  For $\prob$-a.e.\ $\om$, under $\Prob_{0,0}^{\om}$ the sequence of processes $\{\frac{1}{n} M_{t n^2} \,:\, t \geq 0\}$ converges in law to a Brownian motion with a deterministic, time-independent, non-degenerate covariance matrix $\Si^2$ given by
  \begin{align}
    \Si_{i,j}^2
    \;=\;
    \mean\!%
    \Big[
      {\textstyle \sum_{x \sim 0}}\;
      \om_0(0,x)\, \Phi_0^i(\om, x)\, \Phi_0^j(\om, x)
    \Big].
  \end{align}
\end{prop}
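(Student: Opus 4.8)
The plan is to deduce the assertion from a martingale functional central limit theorem, whose hypotheses we verify by an ergodic theorem for the environment \emph{seen from the particle}. Fix $v \in \bbR^d$ and set $N^{(n)}_t \ldef \tfrac1n\, v\cdot M_{t n^2}$, a $\Prob_{0,0}^{\om}$-martingale. Introducing the function $g_v\!:\Om\to\bbR$,
\[
  g_v(\om)
  \;\ldef\;
  \sum_{y \sim 0} \om_0(0,y)\,\big(v\cdot\Phi_0(\om,y)\big)^2 ,
\]
a change of variables $s = u n^2$ in \eqref{eq:martingale:QV} shows that the predictable quadratic variation of $N^{(n)}$ is
\[
  \big\langle N^{(n)}\big\rangle_t
  \;=\;
  \frac{1}{n^2}\,\big\langle v\cdot M\big\rangle_{t n^2}
  \;=\;
  \int_0^t g_v\big(\tau_{u n^2,\, X_{u n^2}}\,\om\big)\,\md u .
\]
Since $\Phi_0 \in L_{\mathrm{cov}}^2$ we have $g_v \in L^1(\prob)$ with $\mean[g_v] = v\cdot\Si^2 v$. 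By the vector-valued martingale functional CLT, whose hypotheses reduce by polarisation to scalar statements for each $v$, it suffices to verify, for $\prob$-a.e.\ $\om$ and every $t>0$: (a) $\langle N^{(n)}\rangle_t \to t\,(v\cdot\Si^2 v)$ in $\Prob_{0,0}^{\om}$-probability, and (b) the conditional Lindeberg condition.

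The environment seen from the particle, $\eta_u \ldef \tau_{u, X_u}\om$, is a Markov process on $\Om$ for which $\prob$ is invariant, and the time-space ergodicity in Assumption~\ref{ass:P}(i) implies that $\prob$ is ergodic for $(\eta_u)$. Applying the pointwise ergodic theorem to $g_v \in L^1(\prob)$ yields $\tfrac1T \int_0^T g_v(\eta_u)\,\md u \to \mean[g_v]$ as $T\to\infty$ for $\prob\otimes\Prob_{0,0}^{\om}$-a.e.\ realization; taking $T = t n^2$ gives (a) in the annealed sense, and since the limit is deterministic, Fubini's theorem upgrades this to the quenched statement for $\prob$-a.e.\ $\om$. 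For (b), $N^{(n)}$ is a pure-jump martingale whose jump at a spatial step from $x$ to a neighbour $y$ equals $\tfrac1n\, v\cdot\Phi_0(\tau_{s, X_{s^-}}\om,\, y-x)$ by the cocycle property of $\Phi_0$; hence the predictable Lindeberg term is $\int_0^t g_v^{(\ve n)}(\eta_{u n^2})\,\md u$, where $g_v^{(K)}(\om) \ldef \sum_{y\sim0}\om_0(0,y)(v\cdot\Phi_0(\om,y))^2\,\indicator_{\{|v\cdot\Phi_0(\om,y)|>K\}}$. Once $\ve n \geq K$ this is dominated by $\int_0^t g_v^{(K)}(\eta_{u n^2})\,\md u$, which converges by the ergodic theorem to $t\,\mean[g_v^{(K)}]$; letting $K\to\infty$ makes this vanish by dominated convergence, using $g_v\in L^1(\prob)$. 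This establishes the QIP for the martingale part with covariance $\Si^2$.

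The step most in need of care -- and the expected main obstacle -- is the ergodicity of the environment process $(\eta_u)$: because the time-space process is not reversible, $\prob$ is only an \emph{invariant} (not reversible) law for $(\eta_u)$. Ergodicity of the invariant law nonetheless suffices for the pointwise ergodic theorem, and it is inherited from the time-space ergodicity of $\prob$; this, together with the annealed-to-quenched transfer via Fubini, is where the argument is genuinely used.

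Finally, for non-degeneracy we note
\[
  v\cdot\Si^2 v
  \;=\;
  \mean\!\Big[
    {\textstyle\sum_{x\sim0}}\,\om_0(0,x)\,\big(v\cdot\Phi_0(\om,x)\big)^2
  \Big]
  \;=\;
  \Norm{v\cdot\Phi_0}{L_\mathrm{cov}^2}^2 .
\]
Were this to vanish for some $v\neq0$, then $v\cdot\Phi_0 = 0$ in $L_{\mathrm{cov}}^2$, so $v\cdot\Pi = v\cdot\chi_0 \in L_{\mathrm{pot}}^2$. To reach a contradiction we exhibit a nonzero element of $L_{\mathrm{sol}}^2$ pairing nontrivially with $v\cdot\Pi$: the divergence-free unit-current flow in direction $v$ has finite energy $\propto |v|^2\,\mean[\nu_0^{\om}(0)]$, hence lies in $L_{\mathrm{cov}}^2$ \emph{precisely} because $\mean[\om_t(e)^{-1}] < \infty$, and it is solenoidal; its $L_{\mathrm{cov}}^2$ inner product with the potential field $v\cdot\Pi$ must then be $0$, whereas a direct computation gives a positive multiple of $|v|^2$. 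Thus $v\cdot\Si^2 v > 0$ for all $v\neq0$, i.e.\ $\Si^2$ is non-degenerate. (In $d=1$ this recovers, up to a fixed constant, the explicit harmonic-mean value $\si^2 = 1/\mean[\om_0(e)^{-1}]$, manifestly positive.)
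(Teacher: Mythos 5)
Your proposal is correct and takes essentially the same route as the paper's proof: Helland's martingale functional CLT with the convergence of the quadratic variation $\langle v\cdot M\rangle$ supplied by the invariance and ergodicity of $\prob$ for the environment process $\{\tau_{t,X_t}\,\om\}$ (for which the paper simply cites \cite{ADS15, An14}, deferring the Lindeberg and annealed-to-quenched details you spell out to \cite{ABDH12, MP07}, and the non-degeneracy to \cite[Proposition~4.1]{Bi11}, whose Cauchy--Schwarz argument via $\mean[\om_t(e)^{-1}]<\infty$ is exactly your solenoidal-flow pairing in disguise). Two cosmetic slips, neither fatal: the unit-current flow $(v\cdot x)/\om_0(0,x)$ is \emph{not} a cocycle, so it does not lie in $L^2_{\mathrm{cov}}$ as the paper defines it --- the pairing argument still works because the flow has finite weighted norm in the ambient $L^2$-space in which the closure $L^2_{\mathrm{pot}}$ is taken; and your parenthetical $\si^2 = 1/\mean[\om_0(e)^{-1}]$ is a \emph{static} one-dimensional identity, whereas in the dynamic case the paper explicitly remarks that the harmonic coordinate admits no explicit construction, so only a harmonic-mean \emph{lower} bound on $\si^2$ survives via Cauchy--Schwarz.
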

\begin{proof}
  The proof is based on the martingale central limit theorem by Helland (see Theorem 5.1a) in \cite{He82}); the proofs in \cite{ABDH12} or \cite{MP07} can be easily transferred into the time dynamic setting.  The argument relies on the convergence of the quadratic variation of $\{\frac{1}{n} M_{t n^2} \,:\, t \geq 0\}$. Note that the quadratic variation of $M$ is written in terms of the environment process $\{\tau_{t, X_t\,} \om \,:\, t \geq 0\}$ which is a Markov process taking values in $\Om$ with generator $\widehat{\cL}\!: \mathop{\mathrm{dom}}(D_0) \to L^2(\Om,\prob)$,
  \begin{align}
    \big(\widehat{\cL} \vp\big)(\om) 
    \;=\;
    D_0 \phi(\om) \,+\, \sum_{x \sim 0} \om_0(0,x)\,
    \big(\vp(\tau_{0,x\,} \om) \,-\, \vp(\om)\big).
  \end{align}
  Since the measure $\prob$ is invariant and ergodic for the process $\{\tau_{t, X_t\,} \om \,:\, t \geq 0\}$, see \cite[Lemma 2.4]{ADS15} and \cite[Proposition 2.1]{An14} for detailed proofs, the desired convergence of the quadratic variation is a consequence of the ergodic theorem.  Finally, we refer to Proposition 4.1 in \cite{Bi11} for a proof that $\Si^2$ is nondegenerate.
\end{proof} 

\section{Sublinearity of the corrector}
\label{sec:sublinearity}
The key ingredient in the proof of Theorem~\ref{thm:main:1d} is the $\ell^{\infty}$-sublinearity of the corrector as stated in the proposition below.  For simplicity, we focus on the one-dimensional case only; the case $d \geq 2$ has been treated in \cite{ACDS16}.  Recall that $Q(n) = [0, n^2] \times B(n)$ where $B(n) \equiv B(0,n)$.
\begin{prop}[$\ell^{\infty}$-sublinearity]\label{prop:sublinearity:infty}
  For any $p, q \in [1, \infty]$ such that
  \begin{align}
    \frac{1}{p-1} \,+\, \frac{1}{q(p-1)} \;<\; 1.
  \end{align}
  assume that $\mean[\om_t(e)^p] < \infty$ and $\mean[\om_t(e)^{-q}] < \infty$ for all $t \in \bbR$ and $e \in E_1$.  Then,
  \begin{align}
    \lim_{n \to \infty} \max_{(t,x) \in Q(n)}\,
    \big|\tfrac{1}{n}\, \chi(\om, t, x)\big|
    \;=\;
    0,
    \qquad \prob\text{-a.s.}
  \end{align}
\end{prop}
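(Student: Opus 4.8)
The plan is to deduce the $\ell^{\infty}$-sublinearity from an $\ell^{1}$-sublinearity statement via a maximal inequality obtained by Moser iteration, and to prove the latter by the ergodic theorem. I first record the equation for the corrector. Since $\Pi$ is time-independent and $\Phi = \Pi - \chi$ solves \eqref{eq:Phi:pde}, the rescaled corrector $u_n \ldef \tfrac1n\chi(\om,\cdot,\cdot)$ satisfies, for $\prob$-a.e.\ $\om$,
\begin{align*}
  \partial_t u_n(t,x) \,+\, \cL_t^\om u_n(t,x)
  \;=\;
  \tfrac1n \sum_{y \sim x} \om_t(x,y)\,(y-x),
  \qquad (t,x) \in Q(2n),
\end{align*}
that is, $u_n$ solves a degenerate parabolic equation whose inhomogeneity is the (rescaled) local drift, so that Moser's local boundedness scheme applies.

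The technical core is the maximal inequality
\begin{align*}
  \Norm{u_n}{\infty,\infty,Q(n)}
  \;\leq\;
  C(\om)\,\Norm{u_n}{1,1,Q(2n)} \,+\, r_n(\om),
\end{align*}
where $C(\om)$ is governed, for large $n$, by the space-time ergodic averages of $\mu^\om$ and $\nu^\om$---finite by the ergodic theorem under $\mean[\om_t(e)^p]<\infty$ and $\mean[\om_t(e)^{-q}]<\infty$---and the remainder $r_n(\om)\to 0$ because it carries a prefactor $\tfrac1n$. To obtain this inequality I would iterate a Caccioppoli (energy) estimate against a Sobolev inequality. The decisive $d=1$ simplification is that the Sobolev inequality may be taken in the strong form \eqref{eq:sobolev:d<alpha} with $r=\infty$: writing $\sum_e|\nabla f(e)| \leq (\sum_e \om_t(e)\,|\nabla f(e)|^2)^{1/2}\,(\sum_e \om_t(e)^{-1})^{1/2}$, the Cauchy--Schwarz inequality controls $\norm{f}{\infty}{\bbZ}$ directly by the square root of a weighted Dirichlet energy, the weight being a factor of $\nu_t^\om$ and hence controlled by $\mean[\om_t(e)^{-q}]$. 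Because the target exponent is already $\infty$, the iteration is very short, and the condition \eqref{eq:cond:1d} emerges exactly from keeping the constant in the weighted energy estimate finite after distributing the space-time integrations by H\"older's inequality; I would track these exponents to confirm that $\tfrac{1}{p-1}+\tfrac{1}{(p-1)q}<1$ is the sharp threshold.

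It remains to prove $\Norm{u_n}{1,1,Q(2n)} \to 0$ $\prob$-a.s. Here I would use the splitting \eqref{eq:chi:split}. Writing $\psi \ldef \chi_0(\cdot,1)$ for the unit increment and using the cocycle property, for $x \geq 1$,
\begin{align*}
  \chi(\om,t,x)
  \;=\;
  H(t) \,+\, \sum_{k=0}^{x-1} \psi(\tau_{t,k\,}\om),
  \qquad
  H(t) \ldef \int_0^t \big(\cL_s^\om \Phi_0(\tau_{s,0\,}\om,\cdot)\big)(0)\,\md s,
\end{align*}
and symmetrically for $x \leq -1$; crucially $H(t)$ does not depend on $x$. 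By Corollary~\ref{corro:property:chi}(i) one has $\mean[\psi]=0$, so Birkhoff's theorem gives the spatial sublinearity $\tfrac1n\max_{|x|\leq n}|\chi_0(\om,x)|\to 0$, and together with the time-space ergodic theorem the $(1,1)$-average over $Q(2n)$ of the spatial sum tends to $0$. This is the step that becomes transparent in $d=1$, since the corrector field is literally a one-sided ergodic sum of the stationary, mean-zero increment $\psi$.

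The delicate contribution is the $x$-independent term $H(t)$, whose $(1,1)$-average over $Q(2n)$ equals $\tfrac{1}{(2n)^2}\int_0^{4n^2}\tfrac{1}{2n}|H(t)|\,\md t$. A direct appeal to Birkhoff---even knowing $\mean[(\cL_0^\om\Phi_0(\om,\cdot))(0)]=0$---only yields $H(t)=o(t)$, which is too weak by a factor of order $n$. The resolution, and what I expect to be the main obstacle, is to exhibit the integrand as a genuine time-derivative: from the characterization of $\Phi_0$ the drift equals $D_0 u$, where $u$ is the scalar solution of the environment equation $\widehat{\cL}u = V$ with $V(\om)=\sum_{x\sim 0}\om_0(0,x)\,x$ (note $\mean[V]=0$); then $H(t)=u(\tau_{t,0\,}\om)-u(\om)$ telescopes, and its $(1,1)$-average is $O(1/n)$ by the time ergodic theorem provided $u\in L^1(\prob)$. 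Unlike the static model, where this drift vanishes identically, securing the primitive $u$ with sufficient integrability forces one to exploit the antisymmetric time-derivative $D_0$ in $\widehat{\cL}$ to gain regularity that the symmetric spatial part alone does not supply. Granting this, the maximal inequality and the $\ell^1$-sublinearity combine, and a Borel--Cantelli argument along $n=2^j$ (using monotonicity of $Q(n)$ to fill the gaps) yields the assertion of the proposition.
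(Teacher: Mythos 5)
Your architecture matches the paper's: you solve the Poisson equation $\partial_t u + \cL_t^{\om} u = \nabla^* V_t^{\om}$ for $u = \tfrac1n\chi$, run Moser iteration off the $d=1$ Sobolev inequality obtained by telescoping plus Cauchy--Schwarz with weight $\nu_t^{\om}$ (this is exactly Lemma~\ref{lemma:sobolev} and Proposition~\ref{prop:maximal:inequality}), reduce to $\ell^1$-sublinearity, split $\chi(\om,t,x)$ via \eqref{eq:chi:split} into a spatial cocycle sum and the $x$-independent term $H(t)=\int_0^t(\cL_s^{\om}\Phi_0(\tau_{s,0\,}\om,\cdot))(0)\,\md s$, and you correctly diagnose that a direct Birkhoff argument gives only $H(t)=o(t)$, short of the required $o(\sqrt{t})$ by a factor of order $n$. (A minor gloss: for the spatial part you cannot push the ergodic theorem through the absolute value of the one-sided sums $\bigl|\sum_k \psi(\tau_{t,k\,}\om)\bigr|$ directly; the paper's Lemma~\ref{lemma:chi_0:1} handles the joint time-space average by approximating $\chi_0$ by bounded gradients $D\vp_k$ and applying Birkhoff to the small-mean field $|(\chi_0 - D\vp_k)(\cdot,\hat e)|$. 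This is fixable and not the main issue.)

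The genuine gap is your treatment of $H(t)$. You propose to exhibit the drift as a time-derivative, $H(t)=u(\tau_{t,0\,}\om)-u(\om)$ with $u$ a stationary $L^1(\prob)$ solution of an environment equation of the type $\widehat{\cL}u = V$, and you yourself flag securing such a $u$ as ``the main obstacle'' without resolving it. This route should be expected to fail: $\widehat{\cL}$ is not symmetric with respect to $\prob$ and is not invertible on mean-zero functions with any useful integrability --- this is precisely why the paper constructs even the \emph{gradient} $\Phi_0$ only through a Lax--Milgram argument for a regularized equation followed by a limiting procedure in which the primitive does not survive, only gradient-type objects do. Moreover, if a stationary integrable primitive existed, then $\chi(\om,t,0)$ would be a difference of stationary fields, a statement far stronger than sublinearity and false in general dynamic environments. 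The paper's actual argument (Lemma~\ref{lemma:chi:t}) avoids any primitive: it tests the identity \eqref{eq:chi:t:rewritten} against the tent function $f_t(y)=f(y/\sqrt{t})$, exploits that $\chi(\om,t,0)$ does not depend on the summation variable $y$, and after summation by parts rewrites the drift contribution as a space-time average of $\vp_z(\om)=\om_0(0,z)\,\Phi_0(\om,z)$ against the \emph{mean-zero}, compactly supported weight $g_z$; an extension of Birkhoff's theorem for weighted averages (cf.\ \cite{BD03}), namely $k^{-3}\int_0^{k^2}\sum_x g(x/k)\,\vp(\tau_{s,x\,}\om)\,\md s \to \bigl(\int g\bigr)\mean[\vp] = 0$, then kills this term at exactly the scale $\sqrt{t}$ you need, as recorded in \eqref{eq:chi:t:step1}. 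Without this (or an equivalent) device, your proof of the $\ell^1$-sublinearity, and hence of the proposition, is incomplete.
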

The proof is based on both ergodic theory and purely analytic arguments. In a first step, we show the $\ell^{1}$-sublinearity of the corrector, that is the convergence of $\frac{1}{n} \chi$ to zero in the $\| \cdot \|_{1,1, Q(n)}$-norm.  This proof uses the ergodic theorem, the cocycle-property of $\chi_0$ and the fact that $\Phi$ has the particular representation \eqref{eq:Phi:representation} and solves the equation \eqref{eq:Phi:pde}.  By means of the Moser iteration that allows to establish a maximal inequality for a certain class of Poisson equations, we extend in a second step the $\ell^1$-sublinearity of the corrector to the $\ell^\infty$-sublinearity. 

\subsection{$\ell^1$-sublinearity}
Our main goal in this subsection is to proof that the corrector is sublinear in the following sense:
\begin{prop}[$\ell^1$-sublinearity]\label{prop:sublinearity:1}
  It holds that
  \begin{align}\label{eq:sublinearity:1}
    \lim_{n \to \infty} \frac{1}{n^2}\,
    \int_0^{n^2} \frac{1}{|B(n)|}\, \sum_{x \in B(n)}
    \big|\tfrac{1}{n}\, \chi(\om, t, x) \big|\; \md t
    \;=\;
    0,
    \qquad \prob\text{-a.s.}
  \end{align}
\end{prop}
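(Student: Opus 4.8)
The plan is to split the corrector according to the representation \eqref{eq:chi:split},
\[
  \chi(\om,t,x) \;=\; \chi_0(\tau_{t,0\,}\om, x) \,+\, A(\om,t),
  \qquad
  A(\om,t) \;\ldef\; \int_0^t \big(\cL_s^\om \Phi_0(\tau_{s,0\,}\om,\cdot)\big)(0)\,\md s,
\]
and to treat the \emph{space part} $\chi_0(\tau_{t,0}\om,x)$ and the purely time-dependent \emph{drift part} $A(\om,t)$ separately. Writing $I_n$ for the left-hand side of \eqref{eq:sublinearity:1}, the triangle inequality gives $I_n \le I_n^{(1)} + I_n^{(2)}$ with
\[
  I_n^{(1)} = \frac{1}{n^2}\int_0^{n^2}\frac{1}{n\,|B(n)|}\sum_{x\in B(n)}\big|\chi_0(\tau_{t,0\,}\om,x)\big|\,\md t,
  \qquad
  I_n^{(2)} = \frac{1}{n^3}\int_0^{n^2}\big|A(\om,t)\big|\,\md t,
\]
the second expression because $A$ does not depend on $x$ (so the spatial average is trivial). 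It then suffices to prove $I_n^{(1)}\to 0$ and $I_n^{(2)}\to 0$ for $\prob$-a.e.\ $\om$.

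For $I_n^{(1)}$ I would use that in $d=1$ the cocycle property is just telescoping: with $V\ldef\chi_0(\cdot,e_1)\in L^1(\prob)$ one has $\chi_0(\tau_{t,0}\om,x)=\sum_{j=0}^{x-1}V(\tau_{t,j}\om)$ for $x\ge 1$, and the mirror identity for $x\le -1$. Since $\chi_0\in L^2_{\mathrm{pot}}$, pick bounded $\vp_k$ with $\mD\vp_k\to\chi_0$ in $L^2_{\mathrm{cov}}$; Cauchy--Schwarz together with $\mean[\om_0(e)^{-1}]<\infty$ then yields $V_k\ldef(\mD\vp_k)(\cdot,e_1)\to V$ in $L^1(\prob)$. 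For the approximant the telescoping collapses, $(\mD\vp_k)(\tau_{t,0}\om,x)=\vp_k(\tau_{t,x}\om)-\vp_k(\tau_{t,0}\om)$, so its contribution to $I_n^{(1)}$ is at most $2\|\vp_k\|_\infty/n\to 0$. For the remainder, the bound $|\chi_0-\mD\vp_k|(\tau_{t,0}\om,x)\le\sum_{|j|<|x|}|V-V_k|(\tau_{t,j}\om)$ and an interchange of summation reduce its contribution to the space-time average
\[
  \frac{1}{n^2\,|B(n)|}\int_0^{n^2}\sum_{|j|\le n}\big|V-V_k\big|(\tau_{t,j\,}\om)\,\md t,
\]
which by the multiparameter ergodic theorem (here I use the \emph{joint} ergodicity of $\{\tau_{t,x}\}$ from Assumption~\ref{ass:P}(i)) converges $\prob$-a.s.\ to $\mean[|V-V_k|]$. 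Hence $\limsup_n I_n^{(1)}\le \mean[|V-V_k|]$ for every $k$, and letting $k\to\infty$ gives $I_n^{(1)}\to 0$.

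For $I_n^{(2)}$ the representation \eqref{eq:Phi:representation} and the equation \eqref{eq:Phi:pde} are essential. Since $\Phi(\om,t,0)=-A(\om,t)$, evaluating \eqref{eq:Phi:pde} at $x=0$ shows $\partial_t A(\om,t)=h(\tau_{t,0}\om)$ with $h(\om)\ldef\sum_{y\sim 0}\om_0(0,y)\,\Phi_0(\om,y)$, so $A(\om,t)=\int_0^t h(\tau_{s,0}\om)\,\md s$ is a time-additive functional of $h$, which is in $L^1(\prob)$ and has $\mean[h]=0$ by stationarity. The decisive point is that $h$ is a \emph{time-coboundary}: writing $\Phi_0=\Pi-\chi_0$ and using the corrector equation $\widehat{\cL}\theta=\beta$ for the process as seen from the particle (with $\widehat\cL$ as in Proposition~\ref{prop:QIP:martingal}, $\beta(\om)=\sum_{y\sim 0}\om_0(0,y)\,y$ the local drift and $\theta$ the corrector function attached to $\chi_0$), one finds $h=D_0\theta$, whence
\[
  A(\om,t)\;=\;\theta(\tau_{t,0\,}\om)-\theta(\om).
\]
This makes $A$ sub-diffusive, and the result then follows from the (time) ergodic theorem: the extra factor $1/n$ coming from the rescaling of $\chi$ turns $\frac{1}{n^3}\int_0^{n^2}|\theta(\tau_{t,0}\om)|\,\md t=\frac1n\big(\frac{1}{n^2}\int_0^{n^2}|\theta(\tau_{t,0}\om)|\,\md t\big)$ into $\frac1n$ times an a.s.\ finite limit, hence $\to 0$, while the constant part contributes $|\theta(\om)|/n\to 0$.

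The main obstacle is precisely the drift term $I_n^{(2)}$. Knowing only $\mean[h]=0$ is insufficient: the ordinary ergodic theorem gives merely $A(\om,t)=o(t)$, and $\frac{1}{n^3}\int_0^{n^2}o(t)\,\md t$ need not vanish, so a genuinely diffusive drift ($|A(\om,t)|\sim\sqrt t$) would already destroy \eqref{eq:sublinearity:1}. The heart of the matter is therefore to upgrade $\mean[h]=0$ to the coboundary representation $h=D_0\theta$ with a potential $\theta$ that is integrable enough for the time ergodic theorem to apply; this is exactly where the specific construction of $\Phi_0$ and the structure \eqref{eq:Phi:representation}--\eqref{eq:Phi:pde} must be exploited, rather than soft ergodicity alone. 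By contrast, the space part $I_n^{(1)}$ is essentially soft once the one-dimensional cocycle is recognised as a Birkhoff sum, which is the simplification that the introduction alludes to.
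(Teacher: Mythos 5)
Your decomposition and your treatment of the space part $I_n^{(1)}$ are sound and essentially reproduce the paper's Lemma~\ref{lemma:chi_0:1} (approximation of $\chi_0\in L^2_{\mathrm{pot}}$ by bounded gradients $\mD\vp_k$, telescoping, ergodic theorem). The gap is in the drift part $I_n^{(2)}$, and it is exactly at the point you yourself flag as ``the heart of the matter'': the coboundary representation $h=D_0\theta$ is asserted, not proved, and it is not available from the construction. The space $L^2_{\mathrm{pot}}$ is the \emph{closure} of the set of gradients, so $\chi_0$ need not be of the form $\mD\vp$ for any actual $\vp$; analogously, in the time direction the construction of $\Phi_0$ (Lax--Milgram applied to a \emph{regularized} equation, followed by weak limits) does not produce a stationary $\theta\in L^2(\Om,\prob)$, or even $L^1$, solving $\widehat{\cL}\theta=\beta$ --- the generator $\widehat{\cL}$ is non-symmetric precisely because the time-space process is not reversible, and solvability of this Poisson equation in the strong stationary sense fails in general. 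Note also that $A(\om,t)=\chi(\om,t,0)$, so the identity $A(\om,t)=\theta(\tau_{t,0\,}\om)-\theta(\om)$ with stationary integrable $\theta$ would say that the corrector along the time axis is a coboundary; that is a strictly stronger statement than the $\ell^1$-sublinearity in time you are trying to prove, so the argument as written begs the question. As you correctly observe, $\mean[h]=0$ alone only gives $A(\om,t)=o(t)$, which is insufficient; but the upgrade you invoke has no justification in the paper's framework.

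The paper's Lemma~\ref{lemma:chi:t} closes this gap by a different mechanism that avoids any Poisson equation for the environment process. The key observation is that the identity
\begin{align*}
  \chi(\om,t,0)
  \;=\;
  \chi_0(\om,y)\,-\,\chi_0(\tau_{t,0\,}\om,y)
  \,+\,\int_0^t \big(\cL_s^{\om}\Phi_0(\tau_{s,0\,}\om,\cdot)\big)(y)\,\md s
\end{align*}
holds for \emph{every} $y\in\bbZ$, not just $y=0$. One then averages over $y$ against a tent function $f_t(y)=f(y/\sqrt{t})$ supported on scale $\sqrt{t}$: the two $\chi_0$ error terms are controlled by Lemmas~\ref{lemma:chi_0:directional} and \ref{lemma:chi_0:1}, while in the drift term a discrete summation by parts moves the spatial average onto gradients $g_z$ of $f$, which are bounded, compactly supported and have $\int g_z=0$. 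An extension of Birkhoff's theorem to such time-space averages with mean-zero spatial weights (the reference \cite{BD03} in Step~1) then shows the averaged drift vanishes $\prob$-a.s. In short, where you try to kill $A(\om,t)$ pointwise in time via a (nonexistent) stationary potential $\theta$, the paper kills it after a spatial averaging on the diffusive scale $\sqrt{t}$, which is exactly what the cocycle structure and representation \eqref{eq:Phi:representation} make possible.
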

Our proof of Proposition~\ref{prop:sublinearity:1} relies on the following three lemmas.
\begin{lemma}\label{lemma:chi_0:directional}
  For any $\hat{e} \in \{-1, +1\}$ we have that
  \begin{align}\label{eq:chi_0:directional}
    \lim_{n \to \infty} \frac{1}{n}\, \chi_0(\om, n \hat{e})
    \;=\;
    0,
    \qquad \prob\text{-a.s.}
  \end{align}
\end{lemma}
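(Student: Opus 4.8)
The plan is to use the (space) cocycle property of $\chi_0$ to rewrite $\chi_0(\om, n\hat e)$ as a Birkhoff sum and then invoke the pointwise ergodic theorem, with the orthogonal decomposition $L_{\mathrm{cov}}^2 = L_{\mathrm{pot}}^2 \oplus L_{\mathrm{sol}}^2$ supplying the extra input needed to identify the limit as $0$. I treat $\hat e = +1$; the case $\hat e = -1$ is identical with $\tau_{0,-1}$ replacing $\tau_{0,1}$ and $g(\om) = \chi_0(\om,-1)$, whose mean again vanishes. Taking $x = k$ and $y = k+1$ in the cocycle identity gives $\chi_0(\om, k+1) - \chi_0(\om, k) = \chi_0(\tau_{0,k\,}\om, 1)$, so that for every $n \ge 1$
\[
  \tfrac1n\, \chi_0(\om, n) \;=\; \tfrac1n \sum_{k=0}^{n-1} g(\tau_{0,k\,}\om), \qquad g(\om) \ldef \chi_0(\om, 1).
\]
By Corollary~\ref{corro:property:chi}(i) we have $g \in L^1(\prob)$ with $\mean[g] = 0$, and $\tau_{0,1}$ preserves $\prob$ by Assumption~\ref{ass:P}(i), so the right-hand side is an ergodic average.

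The subtlety, which I expect to be the main obstacle, is that Assumption~\ref{ass:P}(i) only guarantees ergodicity of the full \emph{time-space} shift, so the spatial shift $\tau_{0,1}$ alone need not be ergodic; Birkhoff's theorem then only yields convergence to a conditional expectation $\mean[g \mid \cI]$, where $\cI$ is the $\si$-algebra of $\tau_{0,1}$-invariant sets, and this limit need not vanish pointwise merely because $\mean[g] = 0$. To force the limit to be $0$ I exploit that $\chi_0 \in L_{\mathrm{pot}}^2$: choose bounded $\vp_m\!: \Om \to \bbR$ with $\mD\vp_m \to \chi_0$ in $L_{\mathrm{cov}}^2$. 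Setting $h_m(\om) \ldef \chi_0(\om, 1) - \mD\vp_m(\om, 1)$ and splitting the sum gives
\[
  \tfrac1n\, \chi_0(\om, n) \;=\; \tfrac1n\, \mD\vp_m(\om, n) \,+\, \tfrac1n \sum_{k=0}^{n-1} h_m(\tau_{0,k\,}\om).
\]
For each fixed $m$ the gradient part is, by the very definition of $\mD$, equal to $\tfrac1n\big(\vp_m(\tau_{0,n\,}\om) - \vp_m(\om)\big) \to 0$ surely, since $\vp_m$ is bounded, so it remains only to control the error term.

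For the error I estimate the $L^1(\prob)$-norm of $h_m$ by Cauchy--Schwarz, using that in $d=1$ the neighbours of $0$ are $\pm 1$:
\[
  \mean\big[|h_m|\big] \;\le\; \mean\big[\om_0(0,1)\, |h_m|^2\big]^{1/2}\, \mean\big[\om_0(0,1)^{-1}\big]^{1/2} \;\le\; \Norm{\chi_0 - \mD\vp_m}{L_{\mathrm{cov}}^2}\, \mean\big[\om_0(0,1)^{-1}\big]^{1/2},
\]
which tends to $0$ as $m \to \infty$, because $\mean[\om_0(0,1)^{-1}] < \infty$ by Assumption~\ref{ass:P}(ii) and $\mD\vp_m \to \chi_0$ in $L_{\mathrm{cov}}^2$.

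To conclude, applying Birkhoff's theorem to $|h_m| \in L^1(\prob)$ gives $\limsup_n \tfrac1n \sum_{k=0}^{n-1} |h_m(\tau_{0,k\,}\om)| = \mean[|h_m| \mid \cI]$ $\prob$-a.s., and combining this with the vanishing of the gradient part yields, for every $m$,
\[
  \limsup_{n \to \infty}\, \big|\tfrac1n\, \chi_0(\om, n)\big| \;\le\; \mean\big[|h_m| \,\big|\, \cI\big](\om), \qquad \prob\text{-a.s.}
\]
Since $\mean\big[\mean[|h_m| \mid \cI]\big] = \mean[|h_m|] \to 0$, some subsequence of $\mean[|h_m| \mid \cI]$ converges to $0$ $\prob$-a.s.; along it the bound forces the $m$-independent left-hand side to vanish, which proves \eqref{eq:chi_0:directional}. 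Thus the possible non-ergodicity of the spatial shift is exactly what the gradient approximation plus this subsequence argument is designed to overcome, while the remaining steps are routine.
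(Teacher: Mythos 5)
Your proof is correct, but it takes a genuinely different route from the paper's. The paper starts from the same telescopic/cocycle identity and Birkhoff's theorem, and it too confronts the fact that the spatial shift $\tau_{0,\hat{e}}$ alone need not be ergodic; however, it resolves this differently: Birkhoff yields an a.s.\ and $L^1$ limit $f_{\hat{e}}$ with $\mean[f_{\hat{e}}]=0$ that is by construction invariant under the space shift, and the paper then proves that $f_{\hat{e}}$ is \emph{also} invariant under the time shifts $\tau_{t,0}$, using the representation \eqref{eq:chi_0:shift} of $\chi_0(\tau_{t,0\,}\om, n\hat{e})$ through the corrector together with the bound $\mean[|(\cL_0^{\om}\Phi_0)(0)|] \leq \mean[\mu_0^{\om}(0)]^{1/2}\, \Norm{\Phi_0}{L_{\mathrm{cov}}^2} < \infty$, which makes the extra integral term of order $1/n$ in $L^1$; time-space ergodicity from Assumption~\ref{ass:P}(i) then forces $f_{\hat{e}} \equiv \mean[f_{\hat{e}}] = 0$. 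You instead sidestep the time direction entirely: you use only $\chi_0 \in L_{\mathrm{pot}}^2$, telescope the bounded approximants $\vp_m$ exactly, control the error term in $L^1$ by Cauchy--Schwarz against $\mean[\om_0(0,\hat{e})^{-1}]<\infty$, and conclude with a subsequence argument on $\mean[|h_m| \mid \cI]$ (which you could streamline: $\mean[\inf_m \mean[|h_m|\mid\cI]] \leq \inf_m \mean[|h_m|] = 0$ gives the conclusion without passing to a subsequence). All steps check out, including the cocycle property of $\mD\vp_m$ and the bound $\mean[\om_0(0,\hat{e})|h_m|^2] \leq \Norm{\chi_0-\mD\vp_m}{L_{\mathrm{cov}}^2}^2$. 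In effect your argument is the same gradient-approximation technique the paper deploys for the stronger Lemma~\ref{lemma:chi_0:1}, specialized to a single spatial ray with the conditional-expectation wrinkle handled explicitly; as a byproduct you never use harmonicity of $\Phi$, the representation \eqref{eq:chi:split}, or even the mean-zero property $\mean[\chi_0(\om,\hat{e})]=0$ from Corollary~\ref{corro:property:chi}(i), which you cite but do not actually need. What the paper's route buys is that it runs on minimal input about $\chi_0$ ($L^1$, mean zero, plus the time-space harmonic structure) and isolates the mechanism specific to the dynamic setting, namely time-shift invariance of the Birkhoff limit; what yours buys is a shorter, purely spatial proof that makes Lemma~\ref{lemma:chi_0:directional} an immediate corollary of the method behind Lemma~\ref{lemma:chi_0:1}.
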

\begin{proof}
  By rewriting $\chi_0$ as a telescopic sum and using the cocycle property, we first obtain that
  \begin{align}
    \frac{1}{n}\, \chi_0(\om, n\hat{e})
    \;=\;
    \frac{1}{n}\,
    \sum_{j=0}^{n-1} \big(\chi_0(\om, (j+1) \hat{e}) - \chi_0(\om,j \hat{e})\big)
    \;=\;
    \frac{1}{n}\, \sum_{j=0}^{n-1} \chi_0(\tau_{0, j \hat{e}\,}\om, \hat{e}).
  \end{align}
  In view Corollary~\ref{corro:property:chi}(i), the ergodic theorem ensures the existence of the limit
  \begin{align*}
    f_{\hat{e}}(\om)
    \;=\;
    \lim_{n \to \infty} \frac{1}{n}\, \chi_0(\om, n \hat{e}),
    \qquad \prob\text{-a.s. and in } L^1(\Om, \prob).
  \end{align*}
  In particular, $\mean[f_{\hat{e}}] = 0$ and, by construction, $f_{\hat{e}}(\om)$ is invariant with respect to space shift.  Thus, it remains to show that $f_{\hat{e}}(\om) = f_{\hat{e}}(\tau_{t,0\,} \om)$ for any $t \in \bbR$.  But,
  \begin{align}\label{eq:chi_0:shift}
    \chi_0(\tau_{t,0\,} \om, n \hat{e})
    &\;=\;
    \chi(\om, t, n\hat{e}) \,-\, \chi(\om, t, 0)
    \nonumber\\[.5ex]
    &\;=\;
    \chi_0(\om, n \hat{e})
    \,+\,
    \int_0^t
      \big(\cL_s^{\om}\Phi_0(\tau_{s,0\,}\om, \cdot) \big)(n \hat{e})\,
    \md s
    \,-\, \chi_0(\om, 0).
  \end{align}
  Further, notice that $(\cL_s^{\om} \Phi_0(\tau_{s,0\,}\om, \cdot))(n\hat{e}) = (\cL_0^{\om} \Phi_0(\om, \cdot))(0) \circ \tau_{s,n\hat{e}}$ and
  \begin{align*}
    \mean\!\big[\big|(\cL_0^{\om} \Phi_0)(0)\big|]
    \;\leq\;
    \mean[\mu_0^{\om}(0)]^{1/2}\, \Norm{\Phi_0}{L_{\mathrm{cov}}^2}
    \;<\;
    \infty.
  \end{align*}
  Therefore, after dividing both sides of \eqref{eq:chi_0:shift} by $n$, the $L^1(\prob)$-limit of the last two terms vanishes.  Thus, we conclude that $f_{\hat{e}}(\tau_{t,0\,}\om) = f_{\hat{e}}(\om)$ for $\prob$-a.e.\ $\om$, and \eqref{eq:chi_0:directional} follows.
\end{proof}
\begin{lemma}\label{lemma:chi_0:1}
  It holds that
  \begin{align}\label{eq:chi_0:1}
    \lim_{n \to \infty} \frac{1}{n^2}\,
    \int_0^{n^2}\mspace{-6mu} \max_{x \in B(n)}
      \big|\tfrac{1}{n}\, \chi_0(\tau_{t,0\,}\om, x) \big|\,
    \md t
    \;=\;
    0,
    \qquad \prob\text{-a.s.}
  \end{align}
\end{lemma}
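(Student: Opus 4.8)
The plan is to control the space--time average in \eqref{eq:chi_0:1} by decoupling the spatial maximum from the temporal integral through a truncation, thereby reducing the delicate \emph{simultaneous} space--time limit to two pieces, each treatable by an ergodic theorem. Observe first that the left-hand side of \eqref{eq:chi_0:1} equals $\frac{1}{n^2}\int_0^{n^2}\Psi_n(\tau_{t,0\,}\om)\,\md t$, where $\Psi_n(\om)\ldef\max_{x\in B(n)}\frac1n|\chi_0(\om,x)|$.

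The first step is the pointwise (in $\om$) \emph{spatial} sublinearity $\Psi_n(\om)\to0$ for $\prob$-a.e.\ $\om$. Exactly as in the proof of Lemma~\ref{lemma:chi_0:directional}, the cocycle property expresses $\chi_0(\om,k)$ as the $k$-th partial sum of the stationary sequence $j\mapsto\chi_0(\tau_{0,j\,}\om,\hat{e})$, and Lemma~\ref{lemma:chi_0:directional} asserts precisely that $\frac1k\chi_0(\om,k\hat{e})\to0$ a.s.\ for $\hat{e}\in\{-1,+1\}$. Since $a_k/k\to0$ forces $\frac1n\max_{0\le k\le n}|a_k|\to0$, applying this to the partial sums in both directions yields $\Psi_n\to0$ a.s. Now split, for a truncation level $m\in\bbN$, as $\Psi_n=(\Psi_n\wedge m)+(\Psi_n-m)^+$. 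For the bounded part, set $\widetilde\Psi_j^{(m)}\ldef\sup_{n\ge j}(\Psi_n\wedge m)$, so that $0\le\widetilde\Psi_j^{(m)}\le m$ and $\widetilde\Psi_j^{(m)}\downarrow 0$ a.s.\ as $j\to\infty$. For $n\ge j$ one has $\frac{1}{n^2}\int_0^{n^2}(\Psi_n\wedge m)(\tau_{t,0\,}\om)\,\md t\le\frac{1}{n^2}\int_0^{n^2}\widetilde\Psi_j^{(m)}(\tau_{t,0\,}\om)\,\md t$, and since $\widetilde\Psi_j^{(m)}$ is bounded (hence in $L^1$), Birkhoff's theorem for the time flow $\{\tau_{t,0}\}$ gives a.s.\ convergence of the right-hand side to a shift-invariant limit $\mean[\widetilde\Psi_j^{(m)}\mid\cI]$. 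Letting $j\to\infty$ and using conditional dominated convergence, this limit tends to $0$; hence the bounded part contributes $0$ to the $\limsup$ a.s.

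For the tail part I would dominate away the cancellation. Put $w(\om)\ldef|\chi_0(\om,1)|+|\chi_0(\om,-1)|\in L^1(\prob)$ (Corollary~\ref{corro:property:chi}(i)); by the triangle inequality on the telescoping representation, $\Psi_n\le 3\,G_n$ with $G_n(\om)\ldef\frac{1}{|B(n)|}\sum_{x\in B(n)}w(\tau_{0,x\,}\om)$. By convexity of $t\mapsto t^+$ this gives $(\Psi_n-m)^+\le(3G_n-m)^+\le\frac{1}{|B(n)|}\sum_{x\in B(n)}\psi_m(\tau_{0,x\,}\om)$ with $\psi_m\ldef(3w-m)^+\in L^1(\prob)$, so that, after integrating in time,
\[
\frac{1}{n^2}\int_0^{n^2}(\Psi_n-m)^+(\tau_{t,0\,}\om)\,\md t\;\le\;\frac{1}{|B(n)|\,n^2}\sum_{x\in B(n)}\int_0^{n^2}\psi_m(\tau_{t,x\,}\om)\,\md t,
\]
which is the average of the \emph{fixed} function $\psi_m$ over the space--time box $[0,n^2]\times B(n)$ under the $\bbR\times\bbZ$-action $\tau$. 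By the ergodicity in Assumption~\ref{ass:P}(i) this converges a.s.\ to $\mean[\psi_m]$, and $\mean[\psi_m]\to0$ as $m\to\infty$ by dominated convergence. Combining the two pieces, $\limsup_n(\text{LHS of }\eqref{eq:chi_0:1})\le\mean[\psi_m]$ a.s.\ for every $m$, and letting $m\to\infty$ finishes the proof.

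The main obstacle is precisely the simultaneous growth of the averaging box with the $n$-dependence of $\Psi_n$: the truncation is the device that separates the two mechanisms, the bounded part exploiting the a.s.\ spatial sublinearity (i.e.\ the mean-zero cancellation encoded in Lemma~\ref{lemma:chi_0:directional}) and the tail using only integrability of $w$. The residual subtlety is the a.s.\ convergence of the box averages of the merely $L^1$ function $\psi_m$: because the boxes are highly eccentric (time side $n^2$ versus space side $\sim n$), I would check that they form a tempered F{\o}lner sequence for the $\bbR\times\bbZ$-action, so that the multiparameter pointwise ergodic theorem applies; alternatively one may average first in time and then in space and verify that the iterated limit equals $\mean[\psi_m]$.
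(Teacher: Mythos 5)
Your argument is correct, but it takes a genuinely different route from the paper's. The paper's proof never uses Lemma~\ref{lemma:chi_0:directional}: it goes through the potential structure of $\chi_0$, choosing bounded $\vp_k$ with $D\vp_k \to \chi_0$ in $L^2_{\mathrm{cov}}$, so that the gradient part telescopes to $|\vp_k(\tau_{t,x}\,\om) - \vp_k(\tau_{t,0}\,\om)| \leq 2\|\vp_k\|_{L^\infty}$ and is killed by the factor $\tfrac1n$, while the residual's maximum over $B(n)$ is dominated by the space--time average of the fixed $L^1$ function $|(\chi_0 - D\vp_k)(\cdot,\hat e)|$ over $[0,n^2]\times\{0,\dots,(n-1)\hat e\}$, whose a.s.\ limit is then made small via $\mean[\nu_0^{\om}(0)]^{1/2}\,\Norm{\chi_0 - D\vp_k}{L_{\mathrm{cov}}^2} \to 0$ as $k \to \infty$. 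You instead upgrade Lemma~\ref{lemma:chi_0:directional} to the maximal statement $\Psi_n \to 0$ a.s.\ (the elementary fact $a_k/k \to 0 \Rightarrow \max_{k \leq n}|a_k|/n \to 0$ is applied correctly in both directions), truncate $\Psi_n$ at level $m$, treat the bounded piece by sandwiching with the monotone envelopes $\widetilde\Psi_j^{(m)}$ and Birkhoff along the time flow alone --- and your insistence on the conditional limit $\mean[\widetilde\Psi_j^{(m)} \mid \cI]$ plus conditional dominated convergence is exactly right, since Assumption~\ref{ass:P} only gives ergodicity for the joint space--time action, not for the time flow by itself --- and dominate the tail by box averages of the fixed $L^1$ function $\psi_m = (3w - m)^+$; your bound $\Psi_n \leq 3G_n$ checks out, since telescoping over at most $n$ steps in either direction gives $\max_{x \in B(n)}|\chi_0(\om,x)| \leq \sum_{|j| \leq n-1} w(\tau_{0,j}\,\om)$ and $(2n+1)/n \leq 3$. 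The subtlety you flag about the eccentric boxes $[0,n^2]\times B(n)$ is real but resolves as you suspect: the boxes are nested with $|F_n^{-1}F_n| \leq C\,|F_n|$, hence form a Tempelman-regular (in particular tempered) F\o{}lner sequence in $\bbR\times\bbZ$, so the pointwise ergodic theorem applies to $\psi_m \in L^1$; note that the paper's own appeal to ``Birkhoff'' in this proof, and the \cite{BD03} extension invoked in Lemma~\ref{lemma:chi:t}, concern averages over the very same boxes, so this is not an extra cost of your route (your fallback of iterating the time and space limits is the shakier option, as exchanging the two limits would itself require a maximal inequality). As for what each approach buys: the paper's proof is shorter, self-contained given $\chi_0 \in L^2_{\mathrm{pot}}$, and quantitative, bounding the $\limsup$ by $\mean[\nu_0^{\om}(0)]^{1/2}\Norm{\chi_0 - D\vp_k}{L_{\mathrm{cov}}^2}$, at the price of using the $L^2_{\mathrm{pot}}$ approximation and the moment condition $\mean[\nu_0^{\om}(0)] < \infty$ through Cauchy--Schwarz; yours uses only $\chi_0(\cdot,\pm 1) \in L^1(\prob)$ from Corollary~\ref{corro:property:chi}(i) together with the already-established Lemma~\ref{lemma:chi_0:directional}, so it applies verbatim to any $L^1$ space cocycle with a.s.\ directional sublinearity, independently of the Hilbert-space construction of the corrector.
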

\begin{proof}
  Since $\chi_0 \in L_{\mathrm{pot}}^2$, there exists a sequence of bounded functions $\vp_k\!: \Om \to \bbR$ such that $D\vp_k \to \chi_0$ in $L_{\mathrm{cov}}^2$ as $k \to \infty$.  Thus, for any $k \geq 1$ fixed and $x \in B(n)$ we obtain
  \begin{align*}
    \big|\chi_0(\tau_{t,0\,}\om, x) \big|
    &\;\leq\;
    2\|\vp_k\|_{L^{\infty}(\Om,\prob)}
    \,+\,
    \big|(\chi_0-D\vp_k)(\tau_{t,0\,}\om, x) \big|
    \\[.5ex]
    &\;\leq\;
    2\|\vp_k\|_{L^{\infty}(\Om,\prob)}
    \,+\,
    \sum_{j=0}^{n-1}\big|(\chi_0-D\vp_k)(\tau_{t,j \hat{e}\,}\om, \hat{e}) \big|,
  \end{align*}
  where $\hat{e} = \sign{x}$.  Note that we used the cocycle property in the last step.  Hence, by means of Birkhoff's pointwise ergodic theorem, we obtain that for $\prob$-a.e.\ $\om$
  \begin{align*}
    &\lim_{n \to \infty}
    \frac{1}{n^2}\,
    \int_0^{n^2}\mspace{-6mu} \max_{x \in B(n)}
    \big|\tfrac{1}{n}\, \chi_0(\tau_{t,0\,}\om, x) \big|\; \md t
    \\[.5ex]
    &\mspace{36mu}\leq\;
    \sum_{\hat{e} \in \{-1,+1\}} \mspace{-10mu}
    \mean\!\big[\big|(\chi_0-D\vp_k)(\om, \hat{e})\big|\big]
    \;\leq\;
    \mean[\nu_0^{\om}(0)]^{1/2}\, \Norm{\chi_0 - D\vp_k}{L_{\mathrm{cov}}^2}.
  \end{align*}
  By taking the limit $k \to \infty$, the assertion \eqref{eq:chi_0:1} follows.
\end{proof}
\begin{lemma}\label{lemma:chi:t}
  We have that
  \begin{align}\label{eq:chi:t}
    \lim_{n \to \infty} \frac{1}{n^2}\,
    \int_0^{n^2}\mspace{-6mu} \big|\tfrac{1}{n}\, \chi(\om, t, 0) \big|\; \md t
    \;=\;
    0,
    \qquad \prob\text{-a.s.}
  \end{align}
\end{lemma}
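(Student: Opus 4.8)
The plan is to first turn the assertion into a statement about an additive functional of the time shift, and then to recover the extra (sub-diffusive) cancellation that is needed by averaging in space.

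\emph{Reduction.} By Corollary~\ref{corro:property:chi}(ii), i.e.\ \eqref{eq:chi:split}, together with the cocycle property $\chi_0(\cdot,0)=0$, I would write $\chi(\om,t,0)=\int_0^t g(\tau_{s,0}\om)\,\md s$, where $g(\om):=(\cL_0^\om\Phi_0(\om,\cdot))(0)$. Splitting $g=(\cL_0^\om\Pi)(0)-(\cL_0^\om\chi_0)(0)$, the first term equals $\om_0(0,1)-\om_0(0,-1)$ and has mean zero by stationarity, while the second is an $L^1$-limit of $(\cL_0^\om D\vp_k)(0)$ as $D\vp_k\to\chi_0$ in $L^2_{\mathrm{cov}}$, each term of which has vanishing expectation; hence $g\in L^1(\prob)$ with $\mean[g]=0$. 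Note that Birkhoff's theorem for the shift flow only gives $\int_0^t g(\tau_{s,0}\om)\,\md s=o(t)$, which is \emph{not} enough: since $t$ is integrated up to $n^2$ and divided by $n^3$, an $o(t)$ bound still leaves a contribution of order $n$. The point is therefore to improve this to an $o(\sqrt t)$-type bound in an averaged sense.

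\emph{Space averaging.} Evaluating the representation \eqref{eq:Phi:representation} at the shifted environment $\tau_{0,x}\om$ gives $\chi(\tau_{0,x}\om,t,0)=\int_0^t g(\tau_{s,x}\om)\,\md s$, and comparing the equation \eqref{eq:Phi:pde} at $0$ and at $x$ yields $\tfrac{\md}{\md s}\Phi_0(\tau_{s,0}\om,x)=g(\tau_{s,0}\om)-g(\tau_{s,x}\om)$. Integrating in $s$ and using $\Phi_0=\Pi-\chi_0$, I obtain the exact identity
\[
 \chi(\om,t,0)=\frac1{|B(n)|}\sum_{x\in B(n)}\chi(\tau_{0,x}\om,t,0)\;+\;\frac1{|B(n)|}\sum_{x\in B(n)}\big(\chi_0(\om,x)-\chi_0(\tau_{t,0}\om,x)\big),
\]
valid for every $n$. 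After $\tfrac1{n^3}\int_0^{n^2}\!\md t$, the second sum is dispatched by the earlier lemmas: its static part reduces to $\tfrac1n\cdot\tfrac1{|B(n)|}\sum_{x}|\chi_0(\om,x)|$, which vanishes by the directional sublinearity of Lemma~\ref{lemma:chi_0:directional} together with a Cesàro argument, and the part involving $\chi_0(\tau_{t,0}\om,x)$ is dominated by the quantity already controlled in Lemma~\ref{lemma:chi_0:1}.

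\emph{Main term and main obstacle.} It then remains to control $\tfrac1{n^3}\int_0^{n^2}\big|\int_0^t\bar g_n(\tau_{s,0}\om)\,\md s\big|\,\md t$, where $\bar g_n:=\tfrac1{|B(n)|}\sum_{x\in B(n)}g\circ\tau_{0,x}$. Here I would exploit the discrete-divergence structure of $g$: approximating $\chi_0$ by $D\vp_k$, the spatial sum $\sum_{x\in B(n)}g\circ\tau_{0,x}$ telescopes to two boundary contributions localized at $\pm n$ and of bounded size, so that $\bar g_n$ is $O(1/|B(n)|)$ times these boundary terms. Since the two boundary terms share the same expectation by stationarity, their leading linear-in-$t$ parts cancel in $\int_0^t$, and the time-ergodic theorem controls the remainder; the gained factor $1/|B(n)|\asymp 1/n$ then absorbs the $n^2$ produced by the time integration. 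The hard part — and the step on which I expect to spend the most effort — is to make this uniform as the boundary location $\pm n$ itself moves to infinity with $n$ (a space-time rather than purely temporal ergodic statement, most cleanly phrased through a maximal inequality for the $\bbR\times\bbZ$-action), and to pass to the limit $k\to\infty$ in the gradient approximation while keeping the boundary contributions under control via $\|\chi_0-D\vp_k\|_{L^2_{\mathrm{cov}}}$.
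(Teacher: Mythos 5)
Your reduction is correct, your treatment of the two $\chi_0$-terms via Lemmas~\ref{lemma:chi_0:directional} and~\ref{lemma:chi_0:1} matches the paper, and your telescoping identity is right: by the cocycle property, $g(\tau_{0,x}\om)=V^{\om}(x,x+1)-V^{\om}(x-1,x)$ with $V^{\om}(x,x+1)=\om_0(x,x+1)\Phi_0(\tau_{0,x}\om,1)$, so $\sum_{x\in B(n)}g\circ\tau_{0,x}$ collapses to two boundary terms (which, a minor point, are stationary $L^1(\prob)$ random variables, not of ``bounded size''). The genuine gap is the claimed cancellation of the ``leading linear-in-$t$ parts''. Stationarity gives $\mean[\vp\circ\tau_{0,n}]=\mean[\vp\circ\tau_{0,-n-1}]$, but the a.s.\ linear-in-$t$ part of $\int_0^t\vp(\tau_{s,\pm n}\om)\,\md s$ is governed by the Birkhoff limit of the \emph{time flow alone} on the fixed spatial fiber, i.e.\ by $\mean[\vp\,|\,\cI](\tau_{0,n}\om)$ resp.\ $\mean[\vp\,|\,\cI](\tau_{0,-n-1}\om)$, where $\cI$ is the $\si$-field of time-shift invariant events. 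Assumption~\ref{ass:P} only asserts ergodicity of the \emph{joint} time-space action; $\cI$ may well be nontrivial (the static case, explicitly allowed, has $\cI$ equal to the full $\si$-algebra), so the difference $\De_n(\om)\ldef\mean[\vp\,|\,\cI](\tau_{0,n}\om)-\mean[\vp\,|\,\cI](\tau_{0,-n-1}\om)$ need not vanish, and your own bookkeeping shows its contribution after the normalization $n^{-3}\int_0^{n^2}\cdot\,\md t$ is $\tfrac{1}{n^3}\int_0^{n^2}\tfrac{t\,|\De_n|}{2n+1}\,\md t\asymp|\De_n|$, i.e.\ of order one. On top of this, even granting the cancellation you would need the $o(t)$ errors uniformly in the \emph{moving} base points $\pm n$, a moving-target ergodic statement that Birkhoff's theorem at a fixed fiber does not provide; you flag this yourself, but the maximal inequality you gesture at is precisely the missing content, and in the form you need it cannot follow from time-space ergodicity alone for the reason just given.

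The paper's proof avoids exactly this trap by not fixing the averaging window at width $n$: it multiplies the identity \eqref{eq:chi:t:rewritten} by the tent function $f_t(y)=f(y/\sqrt t)$ and sums over $y$, so that after summation by parts the drift term becomes $\tfrac{1}{t^{3/2}}\int_0^t\sum_y g_z(y/\sqrt t)\,\vp_z(\tau_{s,y}\om)\,\md s$ with $g_z(y)=\sqrt t\,(f(y+z/\sqrt t)-f(y))$ bounded, compactly supported and of integral zero, and $\vp_z(\om)=\om_0(0,z)\Phi_0(\om,z)\in L^1(\prob)$. Because the window scales as $\sqrt t$, the boundary contribution is smeared over $\sim\sqrt t$ sites rather than concentrated on two fibers, and the whole expression is a space-time weighted Birkhoff average over the box $[0,t]\times B(\sqrt t)$, to which the extension of the ergodic theorem in \cite{BD03} applies, yielding the limit $\big(\int g_z\big)\,\mean[\vp_z]=0$ using only joint time-space ergodicity; the prefactor $2\sqrt t/n\leq 2$ keeps the normalization consistent. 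So your scheme is the paper's in spirit, but to close the argument you must replace the fixed-width window by one growing with $t$ (the diffusive choice $\sqrt t$), which converts the per-fiber statement your proof needs into the space-time ergodic statement that is actually available.
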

\begin{proof}
  The proof of \eqref{eq:chi:t} comprises two steps.\\
  \textsc{Step 1:} For any $\vp \in L^1(\Om, \prob)$ and $g\!: \bbR \to \bbR$ bounded and compactly supported with $\int_{\bbR} g(y)\, \md y = 0$ an extension of Birkhoff's ergodic theorem, cf.\ \cite{BD03}, yields
  \begin{align*}
    F^{\om}(k)
    \;\ldef\;
   \frac{1}{k^3}
    \int_0^{k^2}\mspace{-3mu}
      \sum_{x \in \bbZ}\, g(x/k)\, \vp(\tau_{s,y\, }\om)\,
    \md s
    \underset{k \to \infty}{\;\longrightarrow\;}
    \bigg(\int_{\bbR}\! g(y)\, \md y\bigg)\, \mean\!\big[\vp\big]
    \;=\;
    0
  \end{align*}
  $\prob$-a.s.  Hence, for every $\ve \in (0, 1)$ there exists $N^{\om}(\ve) < \infty$ such that $|F^{\om}(k)| < \ve$ for all $k \geq N^{\om}(\ve)$.  Since
  \begin{align*}
    \sup_{k > 0}\,
    \frac{1}{k^3}
    \int_0^{k^2}\mspace{-3mu}
      \sum_{x \in \bbZ}\, |g(x/k)|\,|\vp(\tau_{s,y\, }\om)|\,
    \md s
    \;\leq\;
    M
    \;<\;
    \infty,
  \end{align*}
  by choosing $n \geq N^{\om}(\ve)\sqrt{M/\ve}$ we get 
  \begin{align*}
    \frac{1}{n^2}
    \int_0^{n^2}\mspace{-6mu}
      \big| F^{\om}(\sqrt{t})\big|\,
    \md t
    \;\leq\;
    \Big(\frac{N^{\om}(\ve)}{n}\Big)^2\, M \,+\, \ve
    \;\leq\;
    2 \ve.
  \end{align*}
  Thus, we conclude that for $\prob$-a.e.\ $\om$
  \begin{align}\label{eq:chi:t:step1}
    \lim_{n \to \infty} \frac{1}{n^2}
    \int_0^{n^2}\mspace{-2mu}
      \bigg|
        \frac{1}{t^{3/2}}
        \int_0^t
          \sum_{x \in \bbZ}\, g(x/k)\, \vp(\tau_{s,y\, }\om)\,
        \md s
      \bigg|\,
    \md t
    \;=\;
    0.
  \end{align}
  \textsc{Step 2:} Let us now prove \eqref{eq:chi:t}.  First, Corollary~\ref{corro:property:chi}(ii) and \eqref{eq:Phi:pde} imply that 
  \begin{align}\label{eq:chi:t:rewritten}
    \chi(\om, t, 0)
    &\;=\;
    \chi_0(\om, y) \,+\, \int_0^t \partial_s \Phi(\om, s, y)\, \md s
    \,-\, \chi_0(\tau_{t,0\,}\om, y)
    \nonumber\\[.5ex]
    &\;=\;
    \chi_0(\om, y) \,-\, \chi_0(\tau_{t,0\,}\om, y)
    \,+\, \int_0^t \big(\cL_s^{\om} \Phi_0(\tau_{s,0\,}\om, \cdot)(y)\, \md s.
  \end{align}
  for any $y \in \bbZ$.  Further, consider the function $f\!:\bbR \to [0,1]$, $x \mapsto [1 - |x|]_+$ and set $f_t(x) \ldef f(x/\sqrt{t})$ for any $t > 0$.
  Notice that $\supp f_t \subset B(\sqrt{t})$ and $\sum_{z \in \bbZ} f_t(x) \geq \sqrt{t}/2$.  Then, by multiplying both sides of \eqref{eq:chi:t:rewritten} with $f_t$ and summing over all $y \in \bbZ$ we obtain that, for any $t \in (0, n^2]$,
  \begin{align*}
    \big|\tfrac{1}{n}\,\chi(\om, t, 0)\big|
    &\;\leq\;
    \max_{x \in B(n)} \big|\tfrac{1}{n}\, \chi_0(\om, x)\big|
    \,+\, \max_{x \in B(n)} \big|\tfrac{1}{n}\, \chi_0(\tau_{t,0\,}\om, x)|
    \nonumber\\[.5ex]
    &\mspace{36mu}+\,
    \frac{2\sqrt{t}}{n} \sum_{z \sim 0}\,
    \bigg|
      \frac{1}{t^{3/2}}
      \int_0^t
        \sum_{y \in \bbZ}\, g_z(y/\sqrt{t})\, \vp_z(\tau_{s,y\,} \om)\,
      \md s
    \bigg|,
  \end{align*}
  where we introduced for $z \sim 0$ the functions $g_z(y) \ldef \sqrt{t} \big(f(y + z/\sqrt{t}) - f(y)\big)$ and $\vp_z(\om) \ldef \om_0(0,z)\, \Phi_0(\om,z)$ to lighten notation.  Since
  \begin{align*}
    \mean[\om_0(0,z)\, \Phi_0(\om,z)]
    \;\leq\;
    \mean[\om_0(0,z)]^{1/2}\, \Norm{\Phi_0}{L_{\mathrm{cov}}^2}
    \;<\;
    \infty,
  \end{align*}
  $\vp_z \in L^1(\Om, \prob)$.  Moreover, the function $g_z$ is bounded, compactly supported with $\int_{-1}^1 g_z(y)\, \md y = 0$.  In particular, $\supp g_z \subset [-2,2]$ for all $t \geq 1$ and any $z \sim 0$.  Thus, in view of Lemma~\ref{lemma:chi_0:directional} and \ref{lemma:chi_0:1} together with \eqref{eq:chi:t:step1} the assertion \eqref{eq:chi:t} follows.
\end{proof}
\begin{proof}[Proof of Proposition~\ref{prop:sublinearity:1}]
  Since $\chi_0 \in L_{\mathrm{pot}}^2$, we have that $\chi_0(\tau_{t,0}\om, 0) = 0$ for any $t \in \bbR$.  Hence, \eqref{eq:chi:split} can be rewritten as $\chi(\om, t, x) = \chi_0(\tau_{t, 0\,} \om, x) + \chi(\om, t, 0)$ for any $t \in \bbR$ and $x \in \bbZ$ and $\prob$-a.e.\ $\om$.  Thus, \eqref{eq:sublinearity:1} follows from \eqref{eq:chi_0:1} and \eqref{eq:chi:t}.
\end{proof}

\subsection{$\ell^{\infty}$-sublinearity}
The next proposition relies on the application of the Moser iteration scheme that has been implemented for general graphs in \cite[Section~5.2]{ACDS16}.  A key ingredient in this approach is the following Sobolev inequality.
\begin{lemma}[local space-time Sobolev inequality]\label{lemma:sobolev}
  Let $Q = I \times B$, where $I \subset \bbR$ is a compact interval and $B \subset \bbZ$ is finite and connected.  Then, for any $q' \in [1, \infty]$ and $u:\bbR \times \bbZ \to \bbR$ with $\supp u_t \subset B$, it holds that
  \begin{align}\label{eq:sob:ineq:2}
    \Norm{u^2}{\infty, q'/(q'+ 1), Q}
    \;\leq\;
    |B|^2\, \Norm{\nu^{\om}}{1, q'\!, Q}\;
    \bigg(
      \frac{1}{|I|}\, \int_I\, \frac{\cE_{t}^{\om}(u_t)}{|B|}\; \md t
    \bigg),
  \end{align}
  where for any $f\!: \bbZ \to \bbR$
  \begin{align*}
    \cE_t^{\om}(f)
    \;\ldef\;
    \sum_{\{y, y'\} \in E_1}\mspace{-6mu} \om_t(y,y')\, \big(f(y) - f(y')\big).
  \end{align*}
\end{lemma}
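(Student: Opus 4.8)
The plan is to reduce the whole estimate to a pointwise-in-time spatial inequality and then integrate against $t$ by H\"older. The essential observation is the one advertised after Theorem~\ref{thm:main:1d}: in $d=1$ the relevant Sobolev inequality is of $\ell^\infty$-type, so rather than invoking \eqref{eq:sobolev:alpha<d} I would argue directly by telescoping along the line. Thus I fix $t$, write $f = u_t$ (supported on the connected, hence interval-shaped, set $B$), and first bound $\max_{x\in B} f(x)^2$ by the time-$t$ Dirichlet energy $\cE_t^{\om}(u_t) = \sum_e \om_t(e)\,|\nabla f(e)|^2$ weighted by the reciprocal conductances.

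\medskip

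For the spatial bound, since $f$ vanishes off $B$, every $x \in B$ satisfies the telescoping identity $f(x) = \sum_e \nabla f(e)$, the sum running over the edges of a path from $x$ to the complement of $\supp f$; hence $\max_{x\in B}|f(x)| \le \sum_{e\in E(B)}|\nabla f(e)|$, where $E(B)$ is the set of edges incident to $B$. Inserting $\om_t(e)^{1/2}\om_t(e)^{-1/2}$ and applying Cauchy--Schwarz yields
$$
\max_{x\in B}|f(x)|
\;\le\;
\Big(\sum_{e\in E(B)}\om_t(e)\,|\nabla f(e)|^2\Big)^{1/2}
\Big(\sum_{e\in E(B)}\tfrac{1}{\om_t(e)}\Big)^{1/2}
\;=\;
\cE_t^{\om}(u_t)^{1/2}\,\Big(\sum_{e\in E(B)}\tfrac{1}{\om_t(e)}\Big)^{1/2},
$$
using that only edges meeting $B$ carry a nonzero gradient. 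Each such edge is counted at least once in $\sum_{x\in B}\nu_t^{\om}(x)$, so the last factor is at most $\sum_{x\in B}\nu_t^{\om}(x)$. Squaring gives the clean pointwise inequality
$$
\max_{x\in B}u_t(x)^2
\;\le\;
|B|^2\,\Big(\tfrac{1}{|B|}\!\sum_{x\in B}\nu_t^{\om}(x)\Big)\,\frac{\cE_t^{\om}(u_t)}{|B|}.
$$

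\medskip

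It then remains to take the $|I|^{-1}$-normalised $L^{q'/(q'+1)}$-norm in $t$. Writing $s = q'/(q'+1)$, the left-hand side is exactly $\Norm{u^2}{\infty,s,Q}$, while the two $t$-dependent factors on the right are the spatial average $\tfrac{1}{|B|}\sum_x \nu_t^\om(x)$ and the normalised energy $\cE_t^{\om}(u_t)/|B|$. The arithmetic that makes everything fit is $\tfrac{1}{s} = 1 + \tfrac{1}{q'} = \tfrac{1}{q'} + 1$, so the generalised H\"older inequality in time (valid also for the exponent $s<1$), in the form $\|gh\|_{L^s} \le \|g\|_{L^{q'}}\|h\|_{L^1}$, gives
$$
\Big(\frac{1}{|I|}\!\int_I\!\Big(\max_{x\in B} u_t(x)^2\Big)^{s}\md t\Big)^{1/s}
\;\le\;
|B|^2\,\Norm{\nu^{\om}}{1,q',Q}\,\Big(\frac{1}{|I|}\!\int_I\!\frac{\cE_t^{\om}(u_t)}{|B|}\,\md t\Big),
$$
which is the claim; the case $q'=\infty$ (so $s=1$) follows identically with the pair $(\infty,1)$ in place of $(q',1)$.

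\medskip

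Once the pointwise inequality is in place the rest is bookkeeping, so I expect the only points needing genuine care to be the two places where the one-dimensional structure is used. The first is the edge-counting step: one must check that $\sum_{e\in E(B)}\tfrac{1}{\om_t(e)}$ is dominated by $\sum_{x\in B}\nu_t^\om(x)$ itself (interior edges are counted twice, boundary edges once), which is precisely what delivers the prefactor $|B|^2$ with \emph{no} stray constant. The second is matching the H\"older exponents, i.e.\ the identity $\tfrac{1}{s} = \tfrac{1}{q'} + 1$, which is what makes the energy enter to the first power on the right-hand side. Both are exactly the manifestations of the special $d=1$ shape of the Sobolev inequality.
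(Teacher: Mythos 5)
Your proof is correct and follows essentially the same route as the paper's: a telescoping sum in space combined with the Cauchy--Schwarz inequality (splitting each edge term as $\om_t(e)^{1/2}\om_t(e)^{-1/2}$) to get the pointwise bound $\max_{x\in B}u_t(x)^2 \leq |B|^2\,\big(\tfrac{1}{|B|}\sum_{x\in B}\nu_t^{\om}(x)\big)\,\cE_t^{\om}(u_t)/|B|$, followed by the generalised H\"older inequality in time with exponents $\tfrac{q'+1}{q'} = \tfrac{1}{q'} + 1$. You merely make explicit the details the paper's two-line proof leaves implicit (the edge-counting against $\sum_{x\in B}\nu_t^{\om}(x)$ and the exponent matching), and you correctly read $\cE_t^{\om}(f)$ as involving the \emph{squared} gradient, which the paper's displayed definition omits by an evident typo.
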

\begin{proof}
  Since $\supp u_t \subset B$, write for any $x \in B$ the function $u_t(x)$ as a telescopic sum and apply the Cauchy-Schwarz inequality.  This yields
  \begin{align*}
    |u_t(x)|^2
    \;\leq\;
    |B|\, \bigg(\frac{1}{|B|} \sum_{y \in B} \nu_t^{\om}(y)\bigg)
    \bigg(\frac{\cE_t^{\om}(u_t)}{|B|}\bigg).
  \end{align*}
  Thus, for any $q' \geq 1$ the assertion follows by H\"older's inequality.
\end{proof}
\begin{prop}[maximal inequality]\label{prop:maximal:inequality}
  Let $p, p', q' \in [1, \infty]$ be such that
  \begin{align}\label{eq:cond:1d:'}
    \frac{1}{p} \cdot \frac{p'}{p'-1} \cdot \frac{q'+1}{q'}
    \;<\;
    1.
  \end{align}
  Then, for every $\al > 0$ there exist $\ka, \ga' > 0$ and $c(p,p',q') < \infty$ such that for $\prob$-a.e.\ $\om$
  \begin{align}\label{eq:maximal:ineq}
    \max_{(t,x) \in Q(n)} \big|\tfrac{1}{n}\, \chi(\om, t, x)\big|
    \;\leq\;
    c\,
    \Big(
      1 \vee \Norm{\mu^{\om}}{p,p',Q(2n)}\, \Norm{\nu^{\om}}{1,q',Q(2n)}
    \Big)^{\!\ka'}\, \Norm{\tfrac{1}{n} \chi(\om, \cdot)}{\al,\al,Q(2n)}.
  \end{align}
\end{prop}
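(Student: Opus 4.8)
The plan is to run a Moser iteration adapted to the parabolic, time-dependent setting, following the scheme of \cite[Section~5.2]{ACDS16} but exploiting the special one-dimensional Sobolev inequality of Lemma~\ref{lemma:sobolev}, in which the spatial exponent is already $r = \infty$. The starting observation is that the corrector itself solves an inhomogeneous Poisson equation. Since $\Pi$ is time-independent and $\chi = \Pi - \Phi$, equation~\eqref{eq:Phi:pde} gives
\[
  \partial_t \chi(\om, t, x) \,+\, \cL_t^{\om} \chi(\om, t, x) \;=\; \big(\cL_t^{\om}\Pi\big)(x),
\]
and in $d=1$ the right-hand side equals $\om_t(x,x+1) - \om_t(x,x-1)$, hence is pointwise dominated by $\mu_t^{\om}(x)$. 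Writing $u = \tfrac{1}{n}\chi$, the function $u$ solves a parabolic equation whose source is controlled by $\tfrac{1}{n}\mu$, and I would run the iteration separately on $u_+$ and $u_-$.

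First I would establish the energy (Caccioppoli) estimate. Fix nested cubes and let $\eta$ be a space-time cutoff supported in the larger one and equal to $1$ on the smaller; testing the equation against $\eta^2 u_\pm |u_\pm|^{2(\beta-1)}$ for $\beta \geq 1$ and integrating in time while summing in space, the time-derivative term produces $\sup_t$ control of the spatial $\ell^2$-norm of $\eta u_\pm^\beta$, while the generator term produces the averaged space-time Dirichlet energy $\tfrac{1}{|I|}\int_I |B|^{-1}\cE_t^{\om}(\eta u_\pm^\beta)\,\md t$. The source term and the commutator generated by the cutoff are estimated by H\"older's inequality and absorbed, the $\om$-weights there being packaged as $\Norm{\mu^{\om}}{p,p',Q}$. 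The upshot is a bound of the schematic form
\[
  \sup_{t \in I} \frac{1}{|B|}\sum_{x} \big(\eta u_\pm^\beta\big)_t(x)^2
  \,+\,
  \frac{1}{|I|}\int_I \frac{\cE_t^{\om}(\eta u_\pm^\beta)}{|B|}\,\md t
  \;\leq\;
  C\beta^2\,\big(1 \vee \Norm{\mu^{\om}}{p,p',Q}\big)\,\Norm{u_\pm^\beta}{2,2,Q}^2,
\]
with a factor $\beta^2$ that must be tracked through the iteration.

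Next I would feed the Dirichlet-energy term into the Sobolev inequality~\eqref{eq:sob:ineq:2}, which bounds $\Norm{(\eta u_\pm^\beta)^2}{\infty, q'/(q'+1), Q}$ by $|B|^2\,\Norm{\nu^{\om}}{1,q',Q}$ times the averaged energy, and then interpolate this space-$\ell^\infty$/time-$\ell^{q'/(q'+1)}$ estimate against the supremum-in-time, $\ell^2$-in-space control from the energy estimate, using H\"older in the space-time variables. This upgrades the bound to a genuinely higher space-time exponent and yields a one-step reverse-H\"older inequality
\[
  \Norm{u_\pm}{2\beta\si, 2\beta\si, Q'}
  \;\leq\;
  \big(C\beta^2 A\big)^{1/(2\beta)}\,\Norm{u_\pm}{2\beta, 2\beta, Q''},
  \qquad Q' \subset Q'',
\]
where $A = 1 \vee \Norm{\mu^{\om}}{p,p',Q''}\,\Norm{\nu^{\om}}{1,q',Q''}$ and $\si > 1$ is the integrability gain. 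It is exactly here that condition~\eqref{eq:cond:1d:'} enters: the conjugate of the Sobolev time-exponent contributes the factor $(q'+1)/q'$, the temporal H\"older against the $\ell^{p'}$-weight contributes $p'/(p'-1)$, and the spatial H\"older on the weight contributes $1/p$, so the product $\tfrac{1}{p}\cdot\tfrac{p'}{p'-1}\cdot\tfrac{q'+1}{q'} < 1$ is precisely what guarantees $\si > 1$.

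Finally I would iterate. Taking cubes $Q_k = Q\big((1+2^{-k})n\big)$ and exponents $\beta_k = \tfrac{\al}{2}\si^k$, repeated application collapses to
\[
  \max_{(t,x)\in Q(n)}|u_\pm(t,x)|
  \;=\;
  \Norm{u_\pm}{\infty,\infty,Q(n)}
  \;\leq\;
  \prod_{k\geq 0}\big(C\beta_k^2 A\big)^{1/(\al\si^k)}\,\Norm{u_\pm}{\al,\al,Q(2n)},
\]
and the convergence of $\sum_k \si^{-k}$ and $\sum_k k\,\si^{-k}$ makes the infinite product a finite constant times a power $A^{\kappa'}$; summing the $u_+$ and $u_-$ bounds yields~\eqref{eq:maximal:ineq}. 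Arbitrary $\al > 0$ is reached by first running the iteration from a fixed exponent and then lowering the starting exponent through a standard interpolation. The step I expect to be the main obstacle is this interpolation that fuses the parabolic energy estimate with the anisotropic Sobolev inequality: extracting a uniform gain $\si>1$ while keeping the weights packaged exactly as $\Norm{\mu^{\om}}{p,p',Q(2n)}\,\Norm{\nu^{\om}}{1,q',Q(2n)}$ is what pins down~\eqref{eq:cond:1d:'}, and controlling the $\beta$-dependence of the constants so that the iterated product converges is the delicate bookkeeping.
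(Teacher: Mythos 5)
Your plan coincides with the paper's own proof: the paper likewise observes that $u=\tfrac{1}{n}\chi$ solves the Poisson equation \eqref{eq:poisson} (written there in divergence form $\nabla^* V_t^{\om}$ with $V_t^{\om}(x,y)=\tfrac{1}{n}\om_t(x,y)(y-x)$, the same right-hand side you identify), and then runs exactly the Moser scheme you outline --- the energy estimate of \cite[Lemma~5.6]{ACDS16} carrying the weight $\Norm{\mu^{\om}}{p,p',Q(n)}$, the one-dimensional Sobolev inequality \eqref{eq:sob:ineq:2} carrying $\Norm{\nu^{\om}}{1,q',Q(n)}$, and the H\"older--Young interpolation \eqref{eq:interpolation}, with condition \eqref{eq:cond:1d:'} entering precisely to make the iteration exponent $\al = 1/p_* + q'/(p_*(q'+1))$ exceed $1$, i.e.\ your gain $\si>1$. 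The only cosmetic deviations are that the paper iterates in the anisotropic scale $\Norm{u^2}{\al^k p_*,\,\al^k p_*',\,Q(\cdot)}$ rather than your diagonal norms, and keeps the source in divergence form instead of bounding it pointwise by $\tfrac{1}{n}\mu_t^{\om}$; neither changes the substance of the argument.
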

\begin{proof}
  From definition of $\chi$ and \eqref{eq:Phi:pde} it follows that the corrector, $\frac{1}{n} \chi$, is differentiable in $t$ for almost every $t \in \bbR$ and satisfies the following Poisson equation
  \begin{align}\label{eq:poisson}
    \partial_t u \,+\, \cL_t^{\om} u
    \;=\;
    \nabla^* V_t^{\om}
  \end{align}
  with $V_t^{\om}(x, y) \ldef \frac{1}{n} \om_t(x, y) (y-x)$ and $\nabla^* V_t^{\om}(x) = \sum_{y \sim x} V_t^{\om}(x,y)$.  The proof of \eqref{eq:maximal:ineq} is based on the Moser iteration scheme, and the assertion for $\frac{1}{n} \chi$ follows line by line from the proof of \cite[Theorem~5.5]{ACDS16} with $\si=1$, $\si'=1/2$, $n$ replaced by $2n$ if we use instead of \cite[Proposition~5.4]{ACDS16} the Sobolev inequality \eqref{eq:sob:ineq:2}.  For the convenience of reader, we will explain in the sequel the first crucial step of this iteration. 

  For $\si \in (0,1)$, let $Q(\si n) = I(\si n) \times B(\si n)$, where $I(\si n) \ldef [0, \si n^2]$.  Further, consider a cut-off function $\eta\!: \bbZ \to [0, 1]$ with $\supp \eta \subset B(n)$, $\eta \equiv 1$ on $B(\si n)$ and $\max_{\{x,y\} \in E_1} |\eta(x) - \eta(y)| \leq c/n$.  Further, set $u(t,x) \ldef \tfrac{1}{n} \chi(\om,t,x)$ to lighten notation.

  Since $u$ solves \eqref{eq:poisson}, by \cite[Lemma~5.6]{ACDS16}, the following energy estimate holds: For any $\al \geq 1$ and $p, p', \in [1, \infty]$ with $p_*$ and $p_*'$ being the corresponding H\"older conjugates of $p$ and $p'$ we have
  \begin{align}\label{eq:energy:estimate}
    \Norm{|u|^{2\al}}{1, \infty, Q(\si n)} \,+\,
    \int_{I(\si n)}\mspace{-6mu}
      \frac{\cE_{t}^{\om}(\eta |u_t|^{\al})}{|B(n)|}\,
    \md t
    \;\leq\;
    C_{\al} \Norm{\mu^{\om}}{p, p', Q(n)}\,
    \Norm{|u|^{2\al}}{p_*, p_*',Q(n)}^{\ga/(2\al)}
  \end{align}
  where $\ga = 1$ if $\||u|^{2\al}\|_{p_*, p_*',Q(n)} \geq 1$ and $\ga = 1-1/\al$ otherwise.  On the other hand, by means of H\"older's and Young's inequality, cf.\ \cite[Lemma~5.3]{ACDS16}, we obtain for $\al = 1/p_* + q'/(p_*(q'+1))$ that
  \begin{align}\label{eq:interpolation}
    \Norm{|u|^{2\al}}{\al p_*, \al p_*', Q(\si n)}
    \;\leq\;
    \Norm{|u|^{2\al}}{1, \infty, Q(\si n)}
    \,+\, \Norm{|u|^{2\al}}{\infty, q'/(q'+1), Q(\si n)}
  \end{align}
  Thus, combining \eqref{eq:interpolation} with the local space-time Sobolev inequality \eqref{eq:sob:ineq:2} and using that $|B(\si n)|^2/|I(\si n)| \leq 2$ yields
  \begin{align*}
    \Norm{u^2}{\al^2 p_*, \al^2 p_*', Q(\si n)}
    \;\leq\;
    \Big(
      C_{\al} 1 \vee \Norm{\mu^{\om}}{p, p', Q(n)}\, \Norm{\nu^{\om}}{1,q',Q(n)}
    \Big)^{\!1/\al}\, \Norm{u^2}{\al p_*, \al p_*', Q(n)}^{\ga/2}.
  \end{align*}
  Finally, notice that the condition \eqref{eq:cond:1d:'} implies that $\al > 1$.  By iterating this estimate (for details see the proof of \cite[Theorem~5.5]{ACDS16}) the assertion follows. 
\end{proof}
By assumption, $\mean[\om_t(e)^p]< \infty$ and $\mean[\om_t(e)^{-q}] < \infty$ for any $p, q \in [1, \infty]$ satisfying the condition \eqref{eq:cond:1d}.  Since $\|\nu^{\om}\|_{1,q',Q(2n)} \leq \|\nu^{\om}\|_{q',q',Q(2n)}$ for any $q' \in [1, \infty]$ by Jensen's inequality, Birkhoff's ergodic theorem implies that for $\prob$-a.e.\ $\om$
\begin{align*}
  \lim_{n \to \infty} \Norm{\mu^{\om}}{p, p, Q(2n)}
  \;<\; 
  \infty
  \qquad \text{and} \qquad
  \lim_{n \to \infty} \Norm{\nu^{\om}}{q, q, Q(2n)}
  \;<\; 
  \infty.
\end{align*}
Thus, Proposition~\ref{prop:sublinearity:infty} follows immediately from Proposition~\ref{prop:maximal:inequality} with the choice $\al = 1$, $p'=p$ and $q' = q$, combined with Proposition~\ref{prop:sublinearity:1}.
\begin{proof}[Proof of Theorem~\ref{thm:main:1d}]
  Proceeding as in the proof of \cite{ADS15}, the $\ell^{\infty}$-sublinearity of the corrector that we have established in Proposition~\ref{prop:sublinearity:infty} implies that for any $T > 0$ and $\prob$-a.e.\ $\om$
  \begin{align*}
    \sup_{0 \leq t \leq T} \big|\tfrac{1}{n}\, \chi(\om, t n^2, X_{t n^2}) \big|
    \underset{n \to \infty}{\;\longrightarrow\;}
    0
    \qquad \text{in $\Prob_{0,0}^{\om}$-probability}.
  \end{align*}
  Thus, the assertion of Theorem~\ref{thm:main:1d} now follows from Proposition~\ref{prop:QIP:martingal}.
\end{proof}

\subsubsection*{Acknowledgment}
We thank Marek Biskup for bringing our attention to the one-dimensional case.  J.-D.D. thanks RIMS for the kind hospitality and the financial support.

\bibliographystyle{plain}
\bibliography{literature}

\end{document}